
\documentclass[letterpaper, 10 pt, conference]{IEEEtran}  %

\IEEEoverridecommandlockouts                              %
                                                          %
                                                          %
%
%
%

%% requirements
%\usepackage{amsmath}

%%%%%%%%%%%%%%%%%%%%%%%%%%%%%%%%%%%%%%%%%%%%%%%%%%%%%%%%%%%%%
%%%%%%%%%%%%%%%%% Allgemeines %%%%%%%%%%%%%%%%%%%%%%%%%%%%%%%%

% Allgemeine Vektor-Matrix-Definitionen
\newcommand{\vect}[1]{\mathbf{#1}}
\newcommand{\mat}[1]{\mathbf{#1}}

\newcommand{\diffs}[3]{\frac{\partial^2 #1}{
\ifx#2#3 
\partial #2^2
\else
\partial #2 \partial #3
\fi
}}
%\newcommand{\norm}[1]{{\left \| {#1} \right \|}}

%%%%%%%%%%%%%%%%%%%%%%%%%%%%%%%%%%%%%%%%%%%%%%%%%%%%%%%%%%%%%%
%%%%%%%%%%%%%%%%% Vektoren und Matrizen %%%%%%%%%%%%%%%%%%%%%%

%%%%%%%%%%%%%%%%%%%%%%%%%%%%%%%%%%%
%%% Vektoren - Kleinbuchstablen %%%
%%%%%%%%%%%%%%%%%%%%%%%%%%%%%%%%%%%

%\newcommand{\rv}{\bar\qv}

%%%%%%%%%%%%%%%%%%%%%%%%%%%%%%%%%%%%%%%%%%%%%%%%
%%% Vektoren - Kleinbuchstablen - griechisch %%%
%%%%%%%%%%%%%%%%%%%%%%%%%%%%%%%%%%%%%%%%%%%%%%%%

%%%%%%%%%%%%%%%%%%%%%%%%%%%%%%%%%%%
%%% Vektoren - Grossbuchstablen %%%
%%%%%%%%%%%%%%%%%%%%%%%%%%%%%%%%%%%

%%% Einheits-Matrix

%%% Null-Matrix

%%%%%%%%%%%%%%%%%%%%%%%%%%%%%%%%%%%
%%% Matrizen - Grossbuchstablen %%%
%%%%%%%%%%%%%%%%%%%%%%%%%%%%%%%%%%%

%%%%%%%%%%%%%%%%%%%%%%%%%%%%%%%%%%%%%%%%%%%%%%%%
%%% Matrizen - Grossbuchstablen - griechisch %%%
%%%%%%%%%%%%%%%%%%%%%%%%%%%%%%%%%%%%%%%%%%%%%%%%

%%%%%%%%%%%%%%%%%%%%%%%%%%%%%%%%%%%%%%%%%%%%%%%%
%%% Matrizen - mit indizes                   %%%
%%%%%%%%%%%%%%%%%%%%%%%%%%%%%%%%%%%%%%%%%%%%%%%%

%

%
%
\makeatletter

\let\proof\@undefined
\let\endproof\@undefined
\makeatother

\usepackage{epsfig} %
\usepackage{amsmath} %
\usepackage{amssymb}  %
\usepackage{amsthm}
\usepackage{amsfonts}
\usepackage{mathtools}

\usepackage{paralist}

\newtheorem{prop}{Proposition}

\newtheorem{thm}{Theorem}
\newtheorem{remark}{Remark}
\newtheorem{lem}[thm]{Lemma}

\theoremstyle{definition}
\newtheorem{dfn}{Definition}
\theoremstyle{remark}

\usepackage{verbatim}

\usepackage{mathptmx}
\usepackage{times}
\usepackage{bm}

\usepackage{booktabs}
\usepackage{epstopdf}

\usepackage[norelsize,ruled,vlined,linesnumbered]{algorithm2e}
\SetInd{0.2em}{0.7em}
\SetCommentSty{textit}
\SetAlgoInsideSkip{smallskip}
\DontPrintSemicolon

\usepackage[ruled,vlined,linesnumbered]{algorithm2e}
\usepackage[noend]{algorithmic}

\usepackage[us]{datetime}

\usepackage[usenames,dvipsnames,table]{xcolor}

\usepackage{hyperref} %
\hypersetup{
    colorlinks,
    citecolor=black,
    filecolor=black,
    linkcolor=black,
    urlcolor=black,
    pdfauthor={},
    pdfsubject={},
    pdftitle={}
}

\usepackage{todonotes}

\usepackage{graphicx}

\usepackage{latexsym}
\usepackage{color}

\usepackage{cite}
\usepackage[per-mode=symbol,binary-units]{siunitx}

\def\figExpOnePosition{\includegraphics[width=1.0\columnwidth]{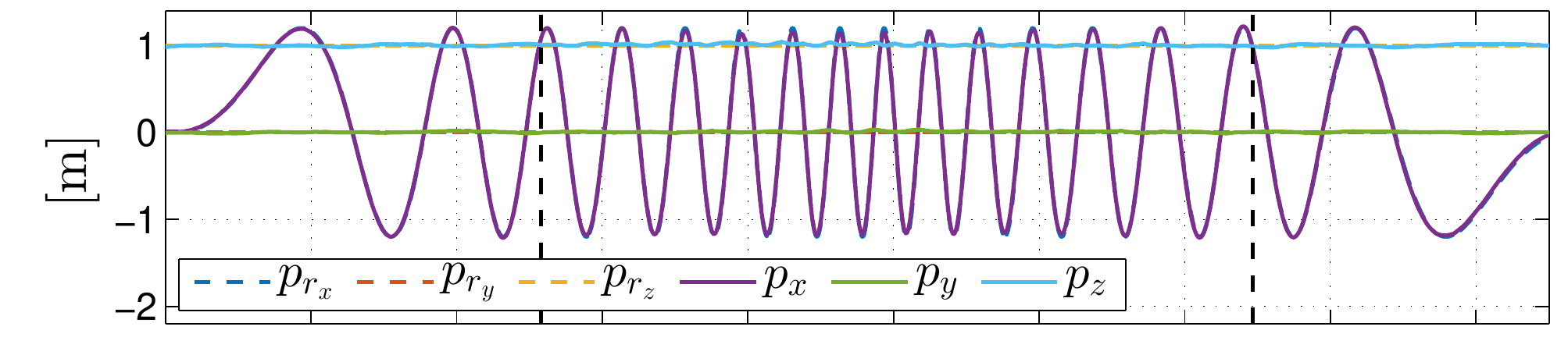}}
\def\figExpOneOrientation{\includegraphics[width=1.0\columnwidth]{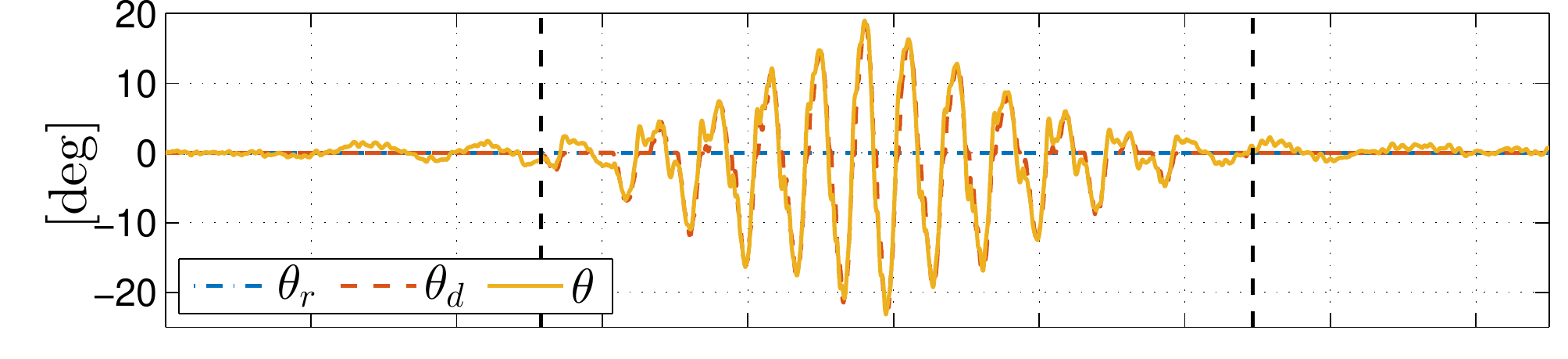}}
\def\figExpOnePosError{\includegraphics[width=1.0\columnwidth]{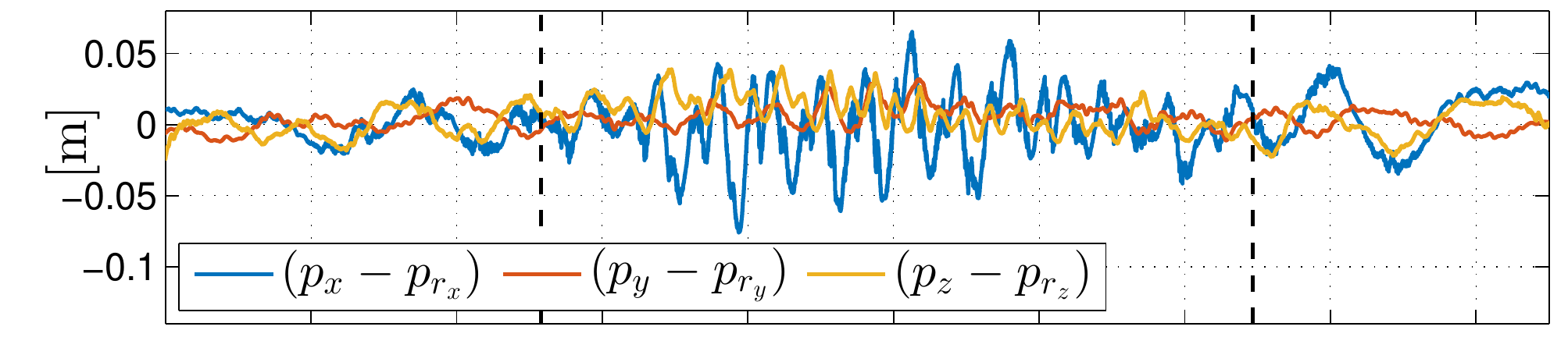}}
\def\figExpOneRotError{\includegraphics[width=1.0\columnwidth]{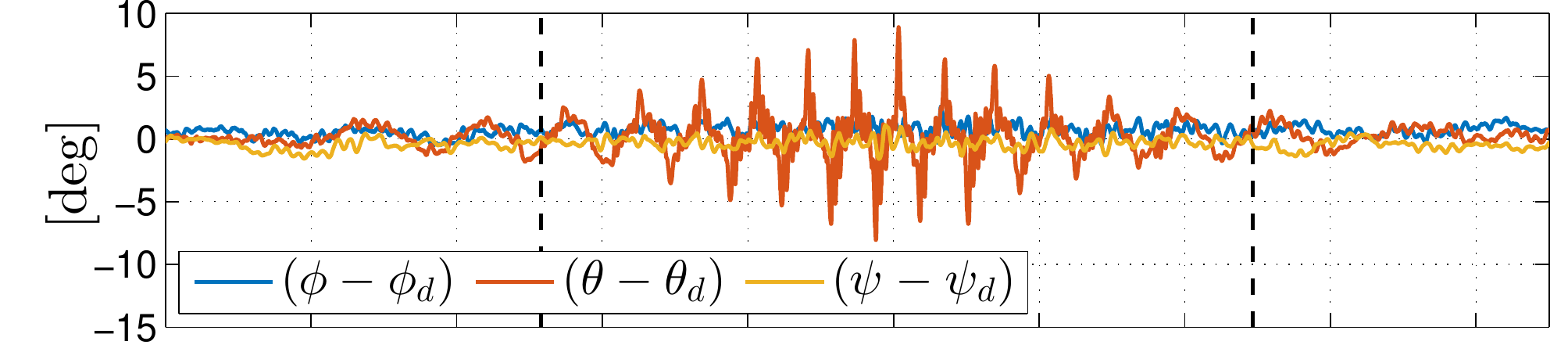}}
\def\figExpOneDesW{\includegraphics[width=1.0\columnwidth]{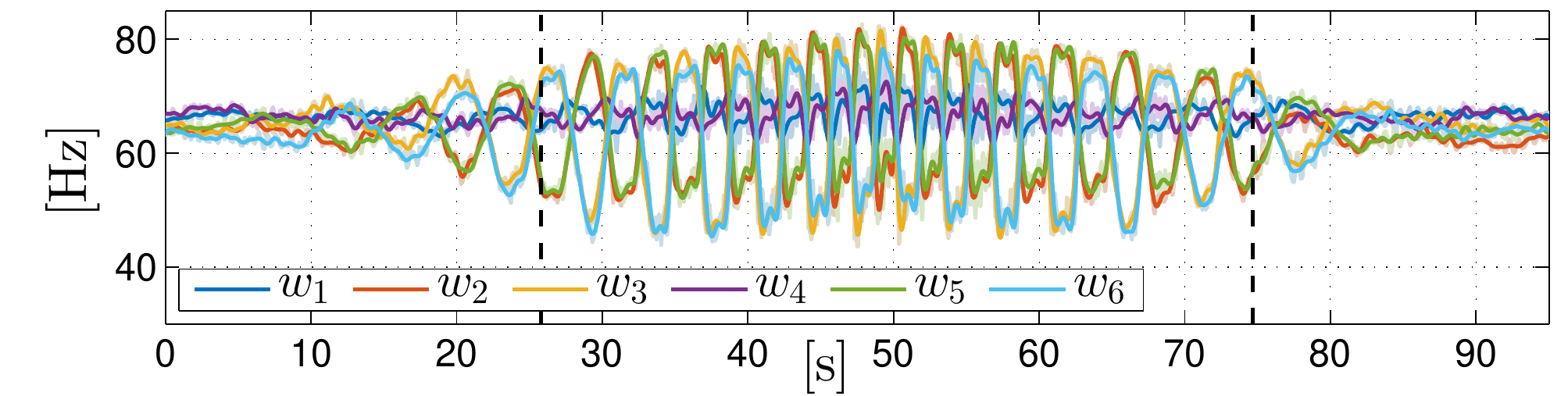}}
\def\figExpOneUOne{\includegraphics[width=1.0\columnwidth]{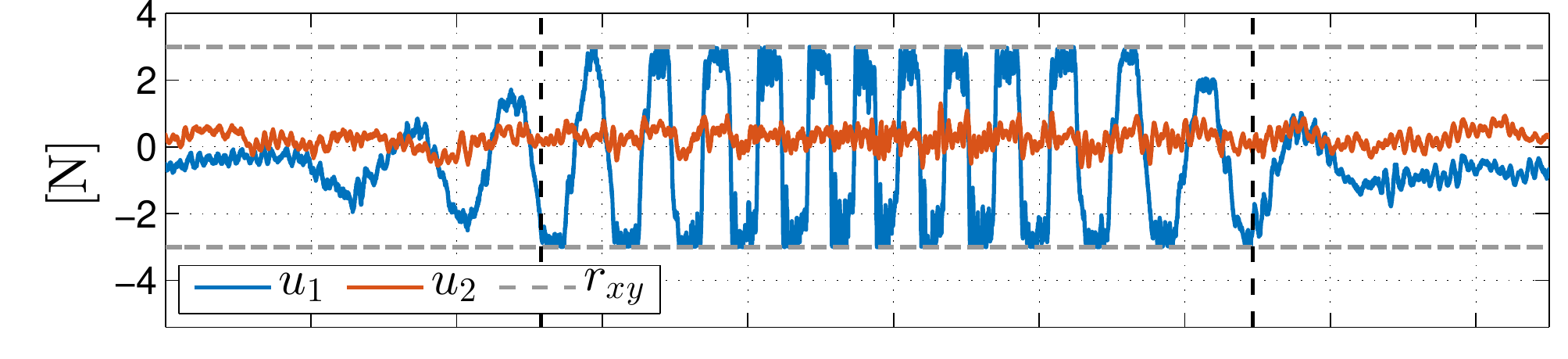}}
\def\figExpOneVelocity{\includegraphics[width=1.0\columnwidth]{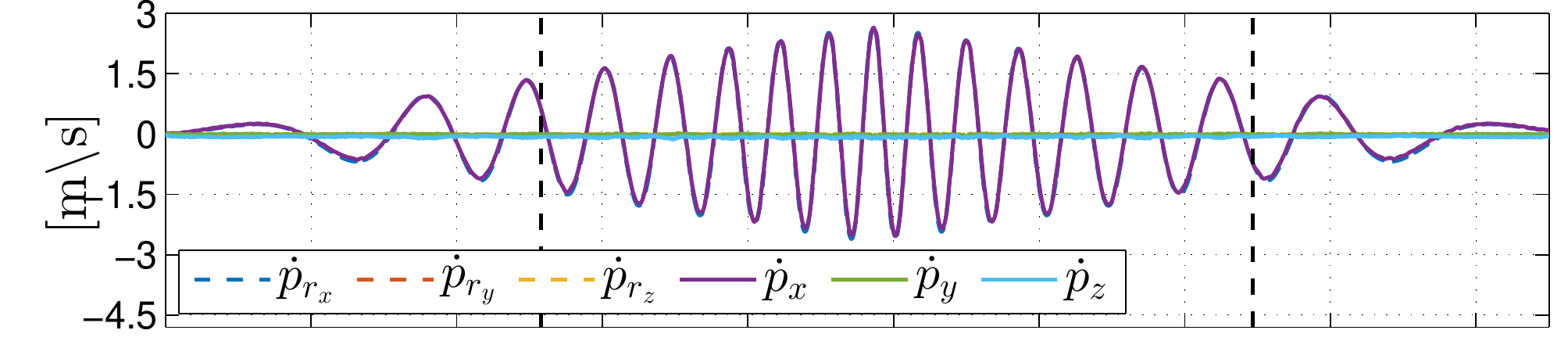}}
\def\figExpOneAcceleration{\includegraphics[width=1.0\columnwidth]{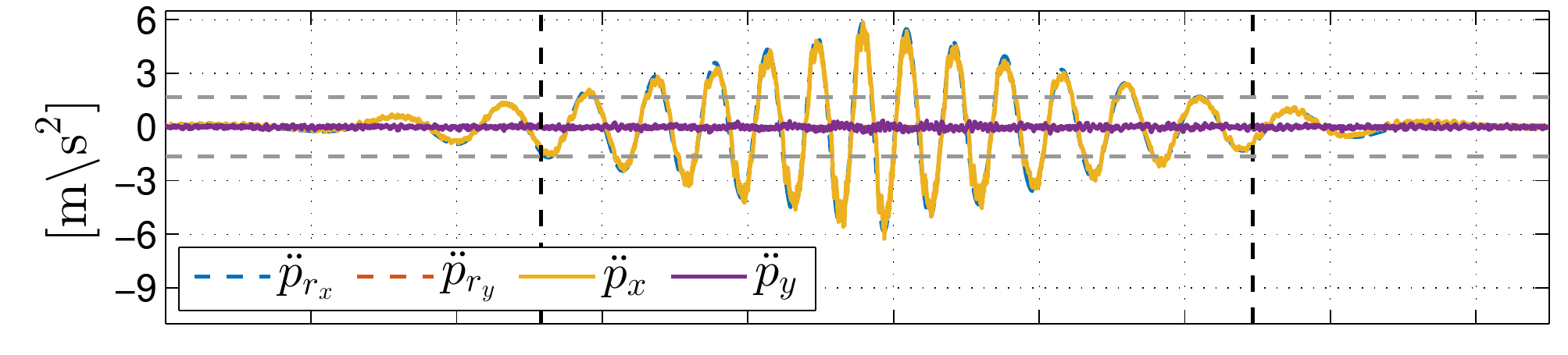}}

\def\figExpOneTwoPosition{\includegraphics[width=1.0\columnwidth]{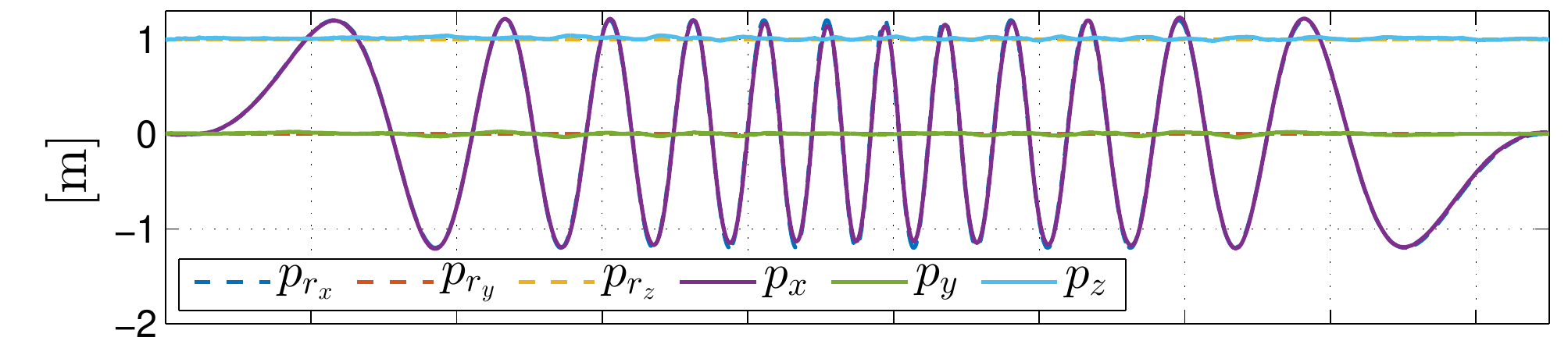}}
\def\figExpOneTwoOrientation{\includegraphics[width=1.0\columnwidth]{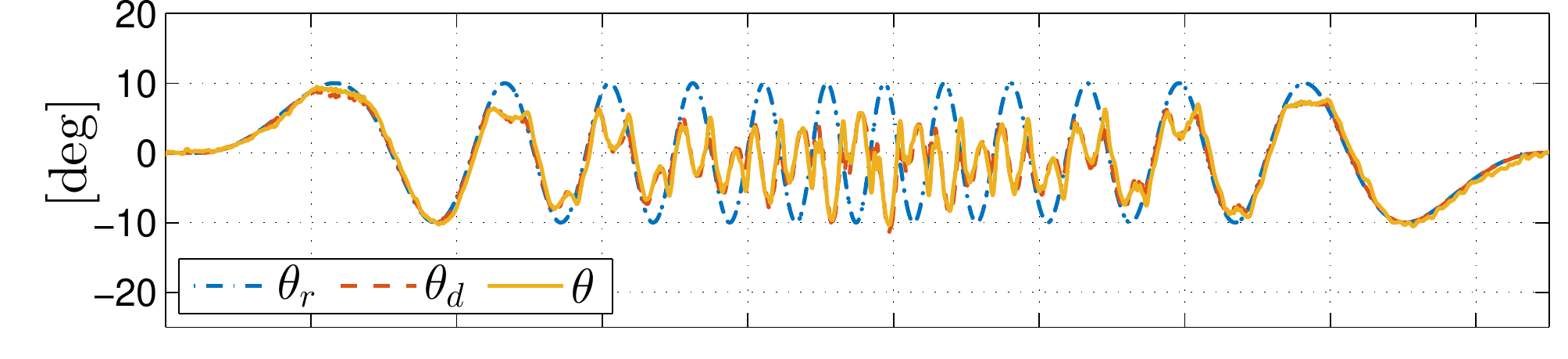}}
\def\figExpOneTwoPosError{\includegraphics[width=1.0\columnwidth]{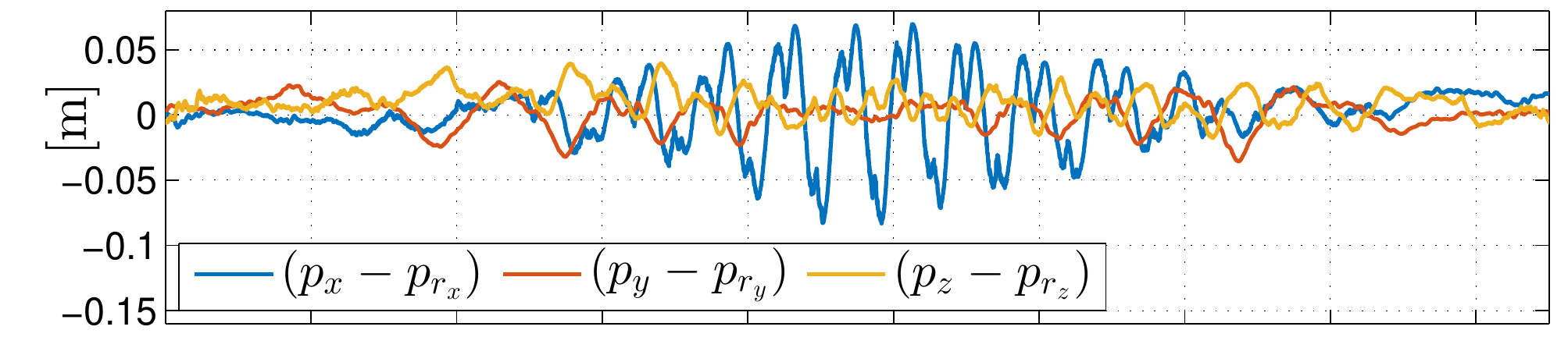}}
\def\figExpOneTwoRotError{\includegraphics[width=1.0\columnwidth]{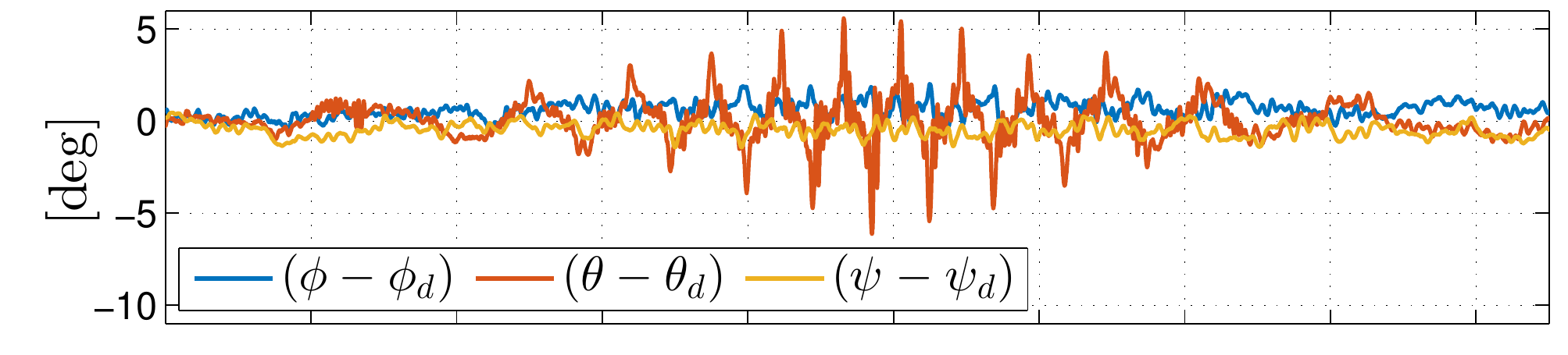}}
\def\figExpOneUOneTwo{\includegraphics[width=1.0\columnwidth]{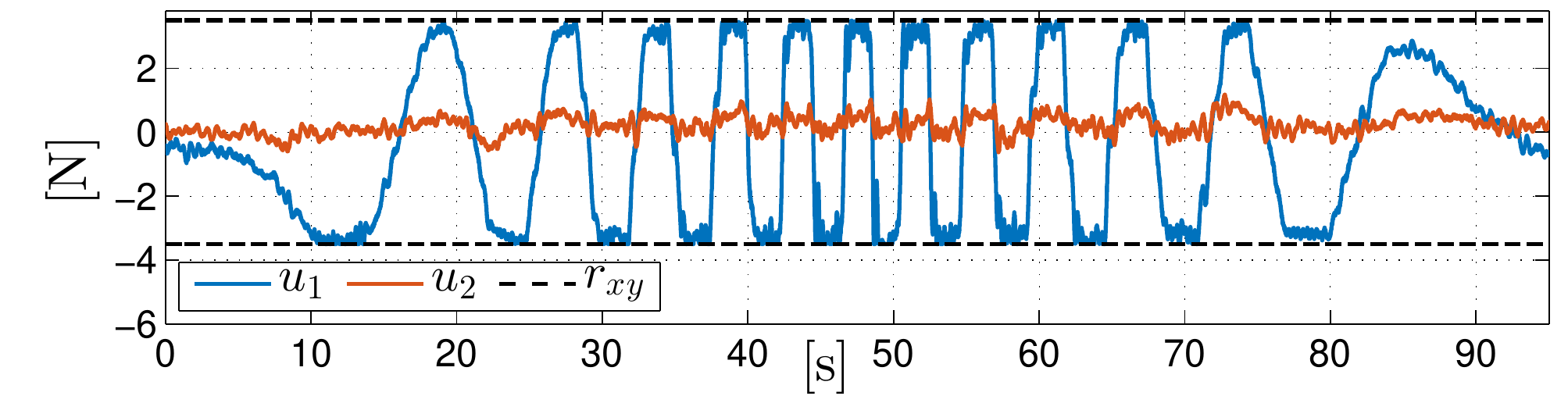}}
\def\figExpOneOrientationZero{\includegraphics[width=1.0\columnwidth]{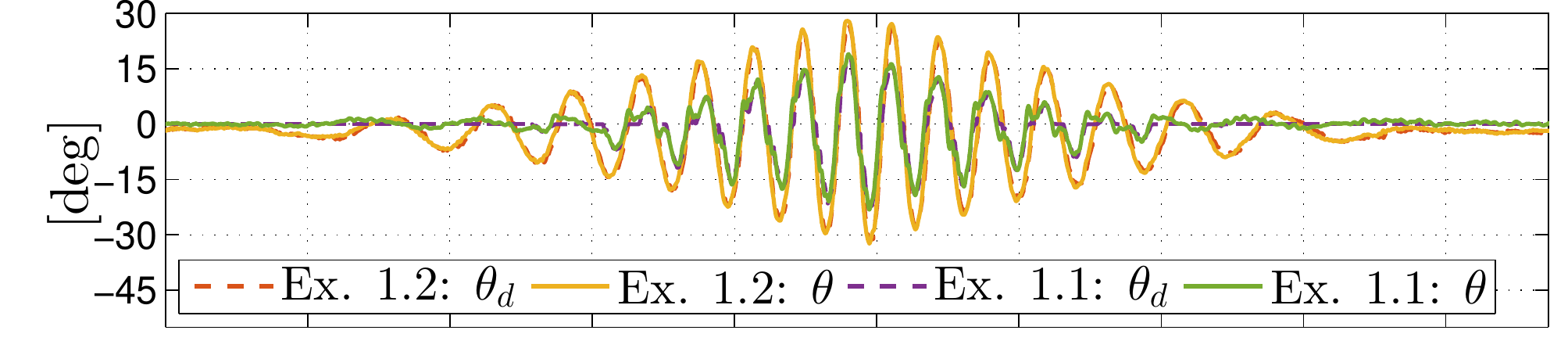}}
\def\figExpOnePosErrorZero{\includegraphics[width=1.0\columnwidth]{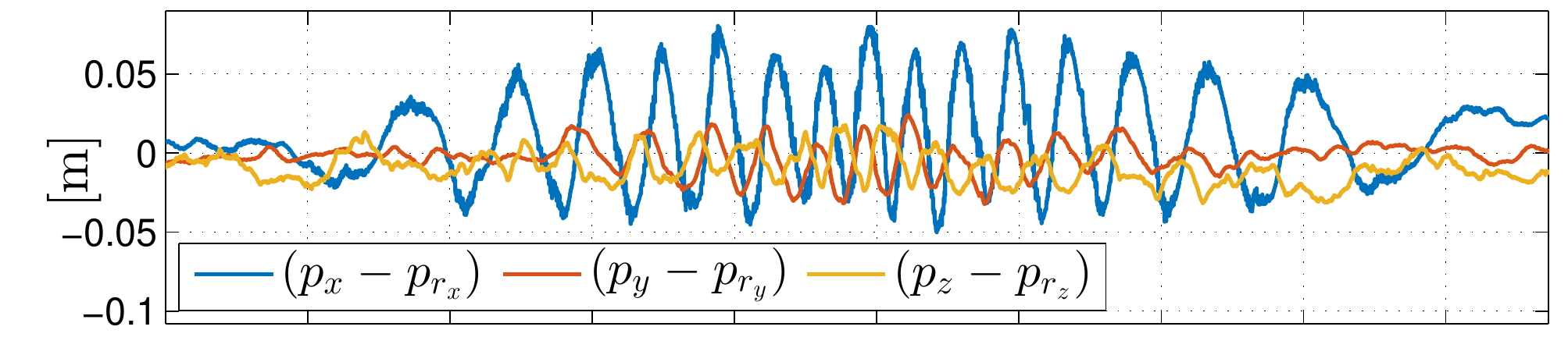}}
\def\figExpOneDesWZero{\includegraphics[width=1.0\columnwidth]{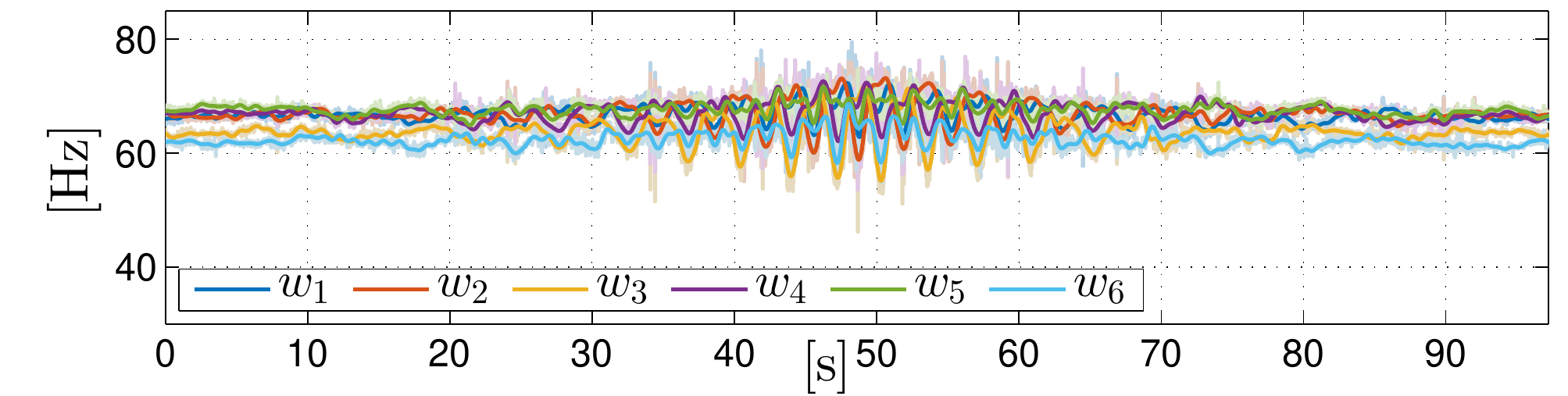}}
\def\figExpOneUOneZero{\includegraphics[width=1.0\columnwidth]{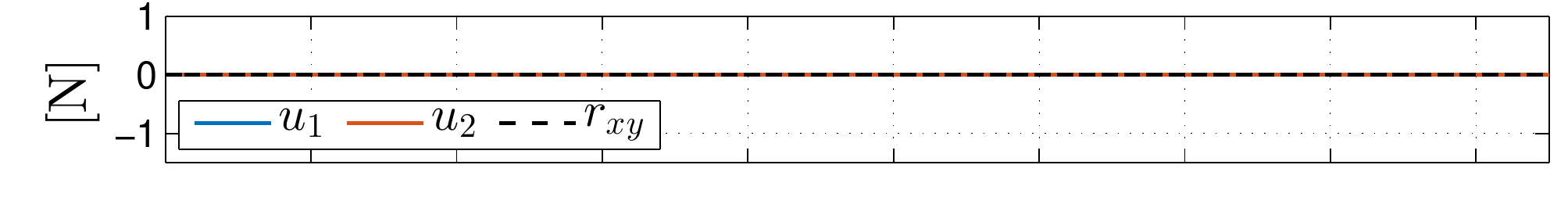}}

\def\figExpOnePositionSat{\includegraphics[width=1.0\columnwidth]{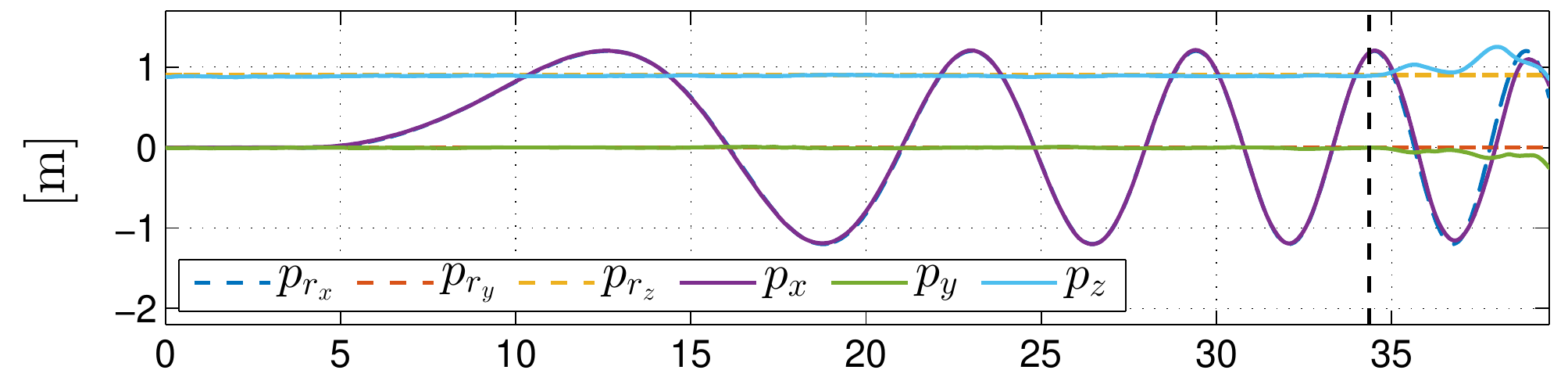}}
\def\figExpOneOrientationSat{\includegraphics[width=1.0\columnwidth]{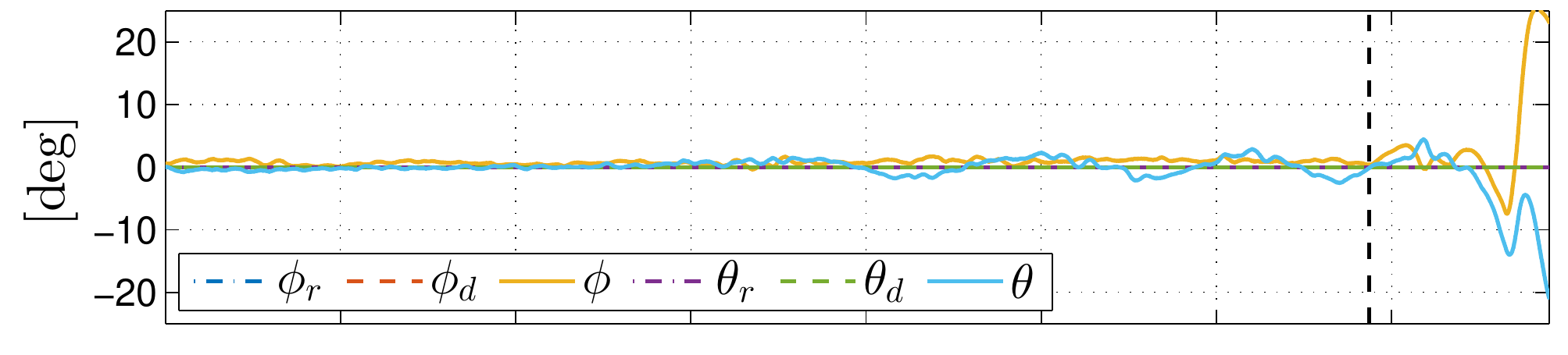}}
\def\figExpOnePosErrorSat{\includegraphics[width=1.0\columnwidth]{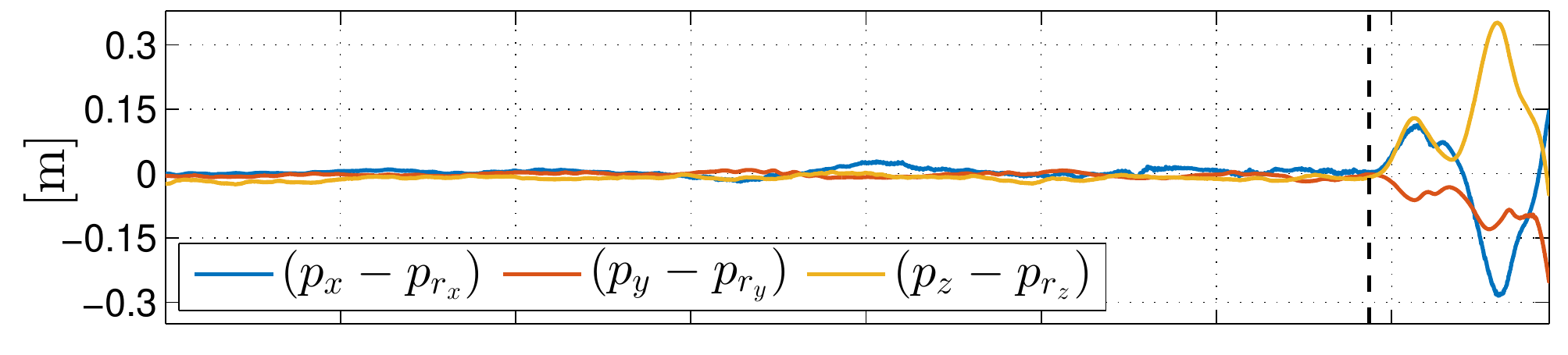}}
\def\figExpOneRotErrorSat{\includegraphics[width=1.0\columnwidth]{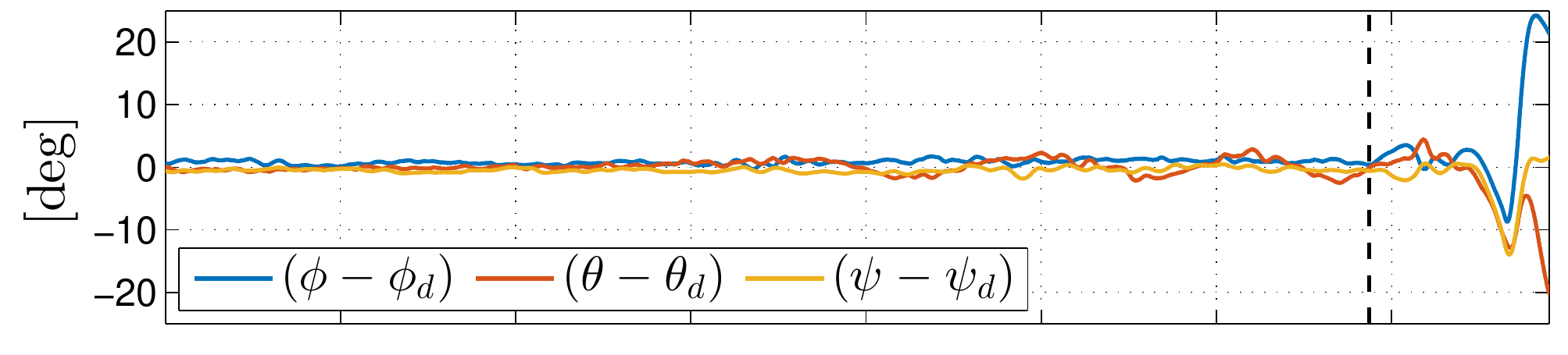}}
\def\figExpOneDesWSat{\includegraphics[width=1.0\columnwidth]{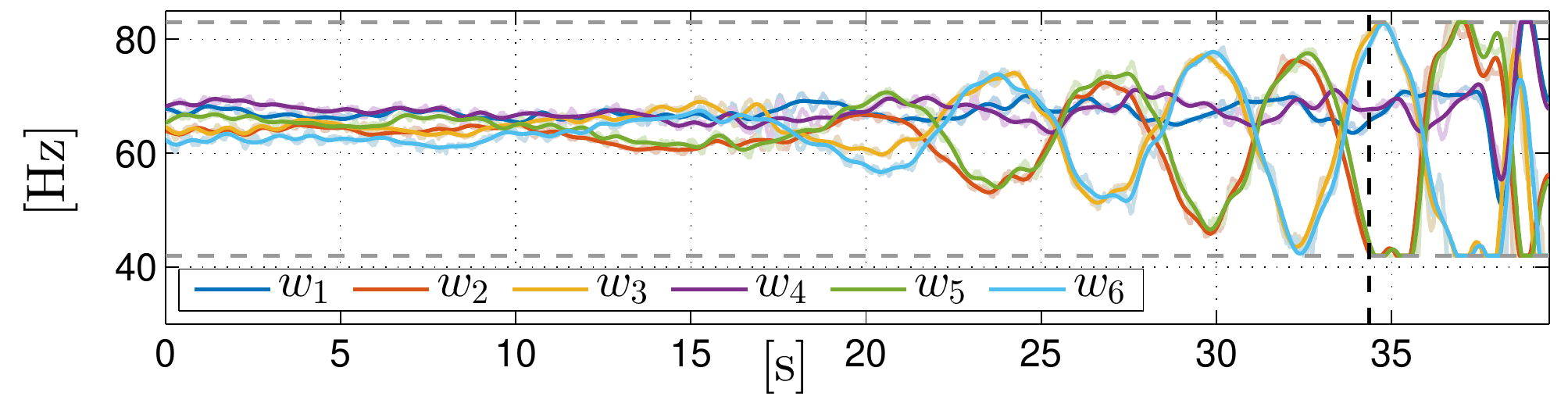}}

\def\figExpTwoPosition{\includegraphics[width=1.0\columnwidth]{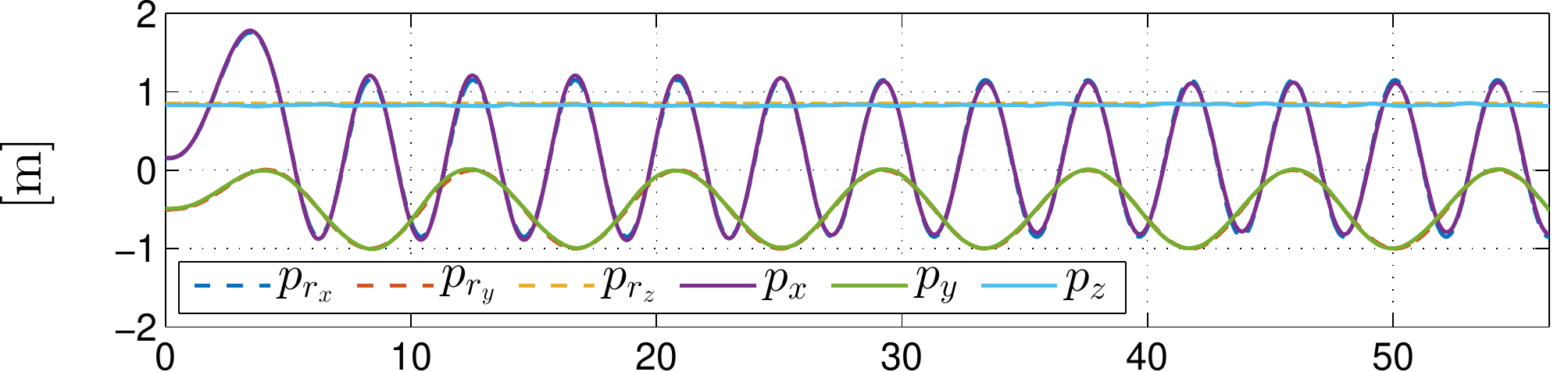}}
\def\figExpTwoOrientation{\includegraphics[width=1.0\columnwidth]{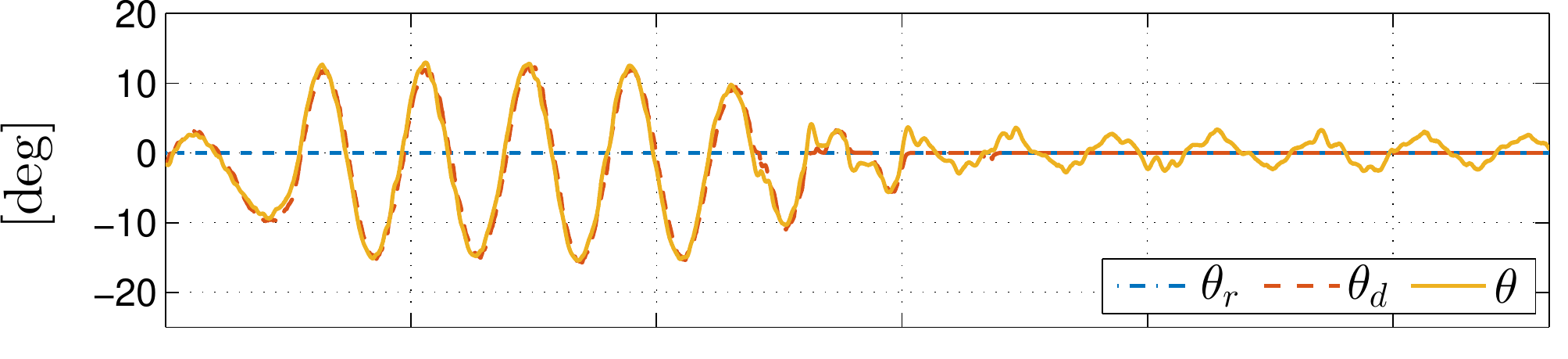}}
\def\figExpTwoPosError{\includegraphics[width=1.0\columnwidth]{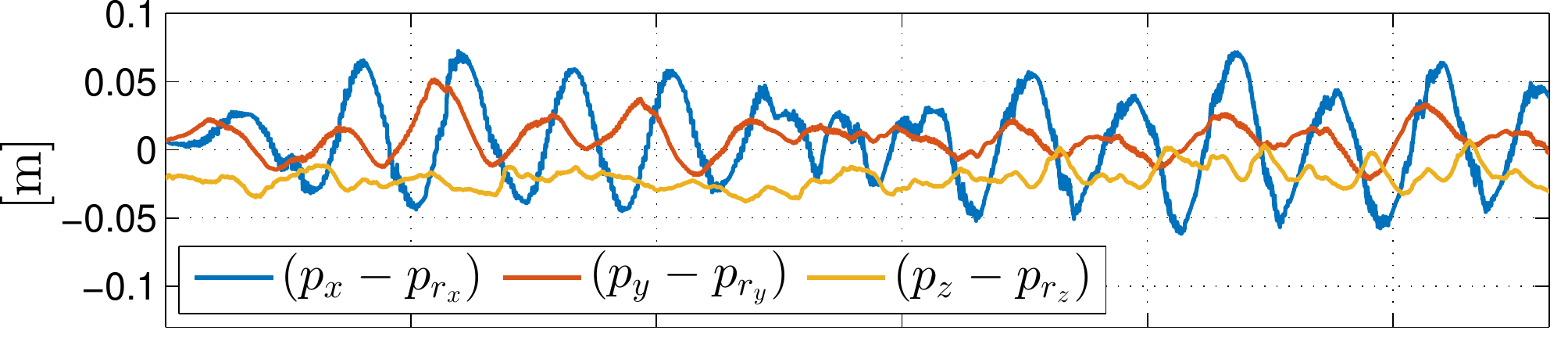}}
\def\figExpTwoRotError{\includegraphics[width=1.0\columnwidth]{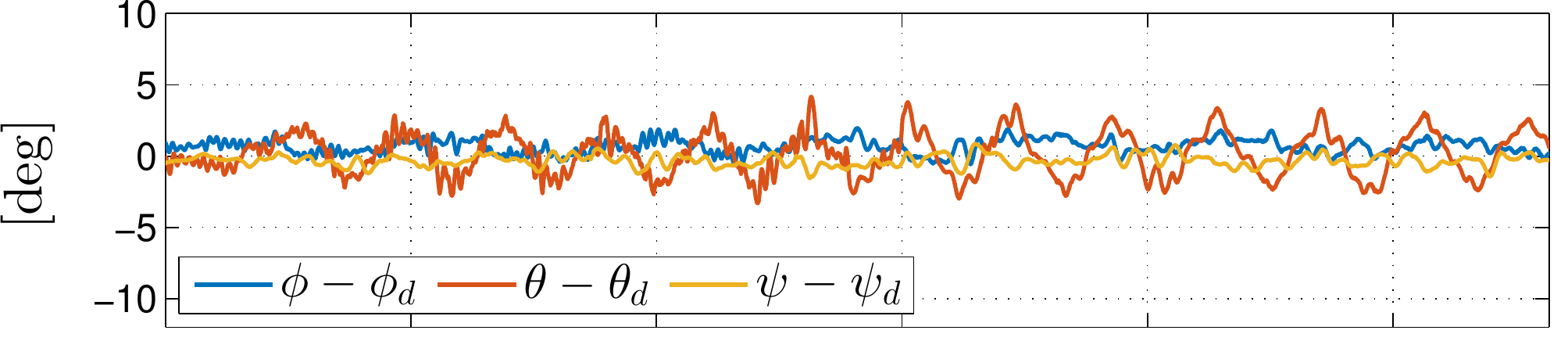}}
\def\figExpTwoDesW{\includegraphics[width=1.0\columnwidth]{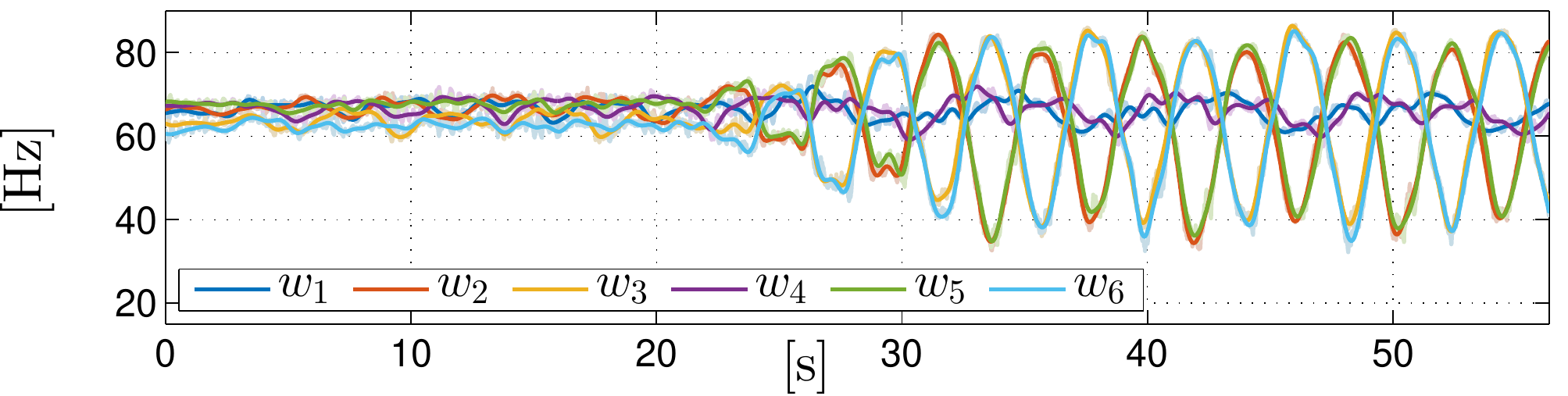}}
\def\figExpTwoUOne{\includegraphics[width=1.0\columnwidth]{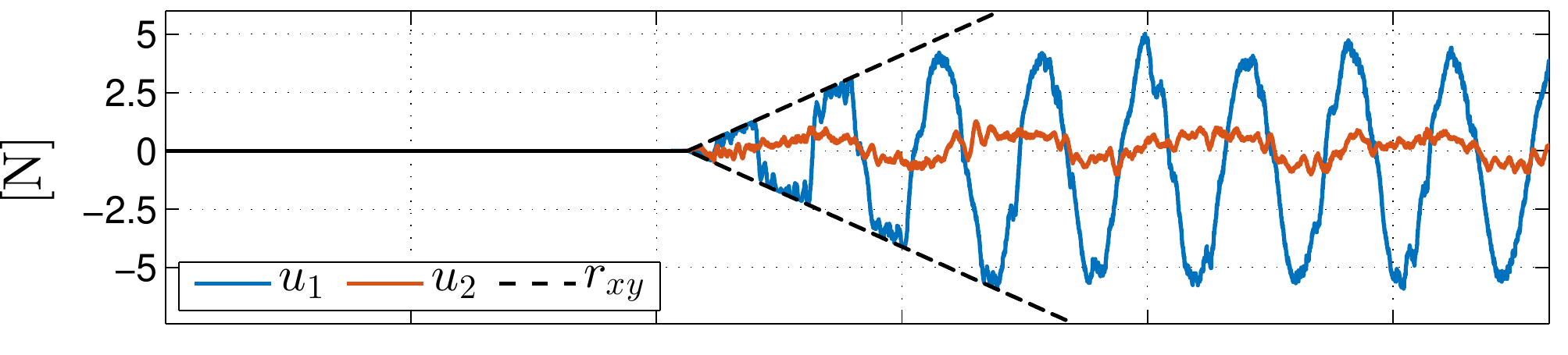}}

\def\figForceVolume{\includegraphics[width=.55\columnwidth]{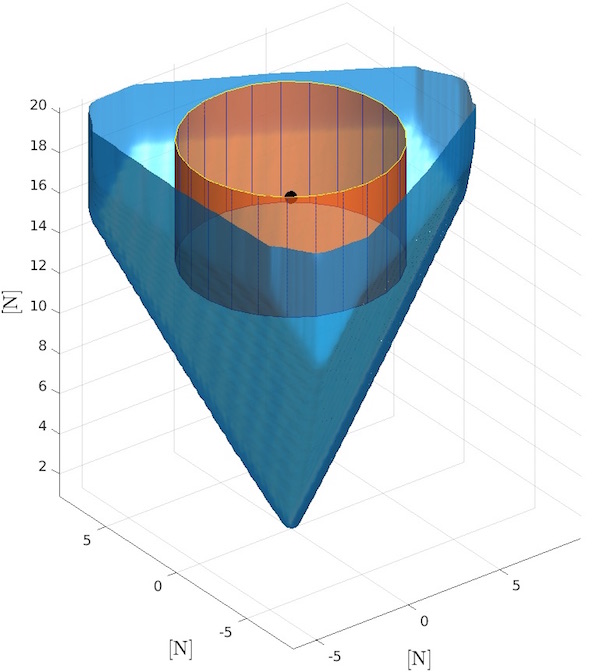}}
\def\figForceVolumeFull{\includegraphics[width=0.43\columnwidth]{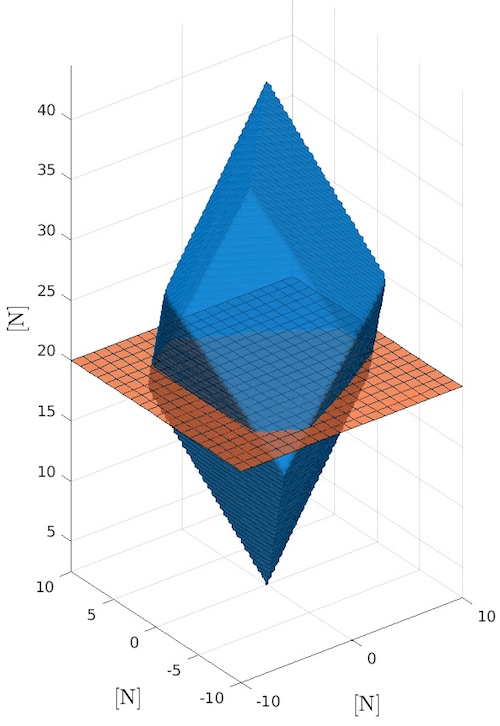}}

\def\figTiltHex{\includegraphics[width=1.\columnwidth]{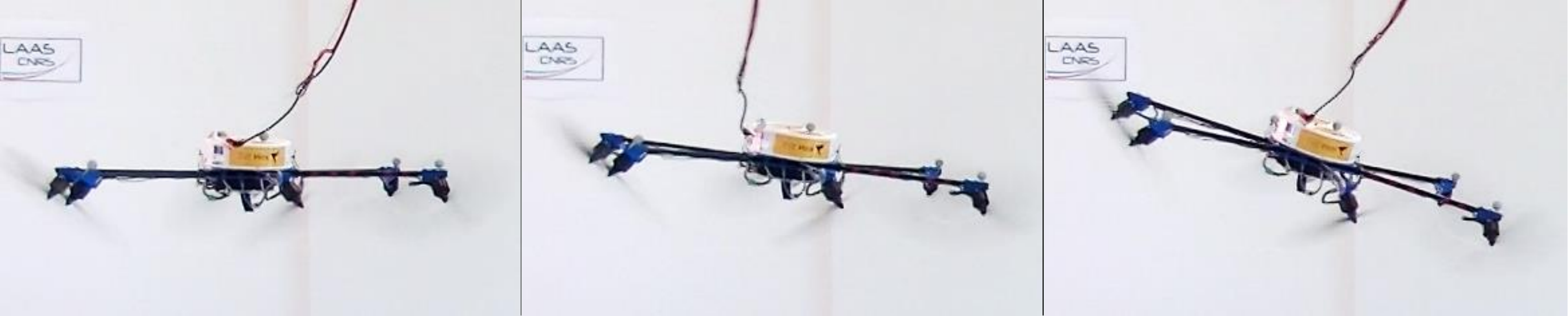}}

\title{Full-Pose Tracking Control  for Aerial Robotic Systems with Laterally-Bounded Input Force}

\author{Antonio Franchi$^1$, Ruggero Carli$^2$, Davide Bicego$^1$, and Markus Ryll$^1$
\thanks{$^1$LAAS-CNRS, Universit\'e de Toulouse, CNRS, Toulouse, France, \tt \footnotesize \href{mailto:afranchi@laas.fr}{afranchi@laas.fr},\href{mailto:dbicego@laas.fr}{dbicego@laas.fr},\href{mailto:mryll@laas.fr}{mryll@laas.fr}}
\thanks{$^2$Department of Information Engineering, University of Padova, Padova, Italy, \tt \footnotesize \href{mailto:carlirug@dei.unipd.it}{carlirug@dei.unipd.it}}
\thanks{This work has been funded by the European Union's Horizon 2020
    research and innovation programme under grant agreement No 644271 AEROARMS.}
}

\begin{document}

\maketitle

\begin{abstract}
In this paper, we define a general class of abstract aerial robotic systems named Laterally Bounded Force (LBF)  vehicles, in which most of the control authority is expressed along a principal thrust direction, while in the lateral directions a (smaller and possibly null) force may be exploited to achieve full-pose tracking. This class approximates well platforms endowed with non-coplanar/non-collinear rotors that can use the tilted propellers to slightly change the orientation of the total thrust w.r.t. the body frame.
For this broad class of systems, we introduce a new geometric control strategy in SE(3) to achieve, whenever made possible by the force constraints, the independent tracking of position-plus-orientation trajectories.
The exponential tracking of a feasible full-pose reference trajectory is proven using a Lyapunov technique in SE(3).
The method can deal seamlessly with both \emph{under-} and  \emph{fully-actuated} LBF platforms.
The controller guarantees the tracking of at least the positional part in the case that an unfeasible full-pose reference trajectory is provided.
The paper provides several experimental tests clearly showing the practicability of the approach and the sharp improvement with respect to state of-the-art approaches.
\end{abstract}
\section{Introduction}\label{sec:intro}

In the last years we have seen exciting new developments and astonishing applications in the field of unmanned aerial vehicles (UAVs). Advancements in control, perception and actuation allow UAVs to perform very agile maneuvers \cite{2010-MelMicKum,2017-FalMueFaeSca}. UAVs perform complex missions solely or in swarms and heterogenous groups~\cite{kushleyev2013towards, sherpa}. Originally performed missions have been contactless as, e.g., environmental monitoring and search and rescue~\cite{tomic2012toward}. Nowadays UAVs are as well utilized as aerial robots to perform direct physical interaction -- UAVs grasp, transport and manipulate our environment as shown by several research projects on aerial manipulation~\cite{arcas,aeroarms,aeroworks}.

Standard multi-rotors (quadrotors, hexarotors, octorotors, etc.) have coplanar propellers generating forces that are all aligned to one direction in body frame, which makes them under-actuated systems.
As a consequence, these platforms can produce a total force only along that direction. In order to follow a generic position trajectory the total force direction in world frame is changed by rotating the whole vehicle. Maneuvers in which rotation and translation are completely independent are precluded to these platforms.
Presence of such an underactuation does not only limit the set of maneuvers that the aerial vehicle can carry out, but even deteriorates its potentiality to interact with the environment by rapidly exerting forces in an arbitrarily-chosen direction of the space while keeping a pre-specified orientation. 
This could be a serious problem in the case that, e.g., the platform has to move through a hostile and cluttered ambient or resist a wind gust while keeping a desired attitude.

New actuation strategies that can overcome the aforementioned issues and then allow a complete tracking in position and attitude have been explored.
The major solution has been to mount rotors in a tilted way such that the thrust of each propeller is not collinear anymore. With this solution, the direction of the total force can be changed by selecting the intensity of the force produced by each propeller. If the number of propellers is  at least six, and tilting directions do not generate a singular configuration, then direction and intensity of both the instantaneous total moment and the instantaneous total force acting on the whole system can be made controllable at will (inside the input envelope).
This solution has been applied in several real implementation and is becoming more and more popular in the aerial vehicles and robotics community. For example, results achieved in~\cite{2012-VoyJia}
show an improvement in resisting an opposing wrench, while the work done in~\cite{2015e-RajRylBueFra,2016-BreDan,2016-ParHerJonKimLee} shows that such solution allows to decouple the tracking of a desired position and orientation. 

These new kind of platforms (sometimes referred to as \emph{fully-actuated}) call for new methods to control them efficiently and to reliably cope with the added complexity of the platforms and of the larger set of tasks in which they can be employed, when compared to standard collinear multi-rotors.
To fill this gap, in this paper we propose a novel method for controlling fully actuated multi-rotor platforms while taking into account the most limiting input bounds they have to cope with, i.e., lateral input force. The proposed controller ensures, in nominal conditions, the tracking of a full-6D pose (position plus orientation) reference trajectory. If the reference orientation and the force needed to track the position trajectory do not comply with the platform constraints, the proposed strategy gives priority to the tracking of the positional part of the trajectory\footnote{This choice is supported by, e.g., the fact that  in typical applications a wrong position tracking is more likely to lead to an obstacle crash than a non perfect orientation tracking.} while also tracking the closest feasible orientation to the reference one.

The proposed method is based on a rigorous analysis in which a formal convergence proof is provided. Furthermore, an extensive  experimental validation is carried out to test the viability of the theory and learn instructive lessons from practice. Theory and practice clearly demonstrate that the proposed method outperforms state-of-the-art methods, in terms of both performances and stability. 

In order to attain generality, the proposed method is designed to work with a large variety of different platforms which include not only the fully-actuated but also the under-actuated (collinear propeller) case. In this way we both maximize the breadth of the impact of the proposed methodology to new full-actuated platforms and we also maintain back-compatibility with the standard platforms.
This last feature makes the proposed method also ideal to control  vectored-thrust vehicles that can transit from an under-actuated to a fully-actuated configuration while flying -- as. e.g., the one presented in~\cite{2016j-RylBicFra}. In fact, by using the proposed controller there is no need of switching between two different controllers for each configuration.

Additionally, we envision that the method proposed in this paper will find large application in the new emerging topic of aerial robotic physical interaction with fully-actuated platforms. A striking example of this application has been shown in~\cite{2017e-RylMusPieCatAntCacFra}, where an admittance control framework has been built around  the controller developed in this paper, and real experiments in contact with the environment are shown.

A preliminary version of the method presented in this paper has been proposed in~\cite{2016j-RylBicFra}. With respect to~\cite{2016j-RylBicFra} we provide here a strongly improved content: 1) a controller that works on a more  general model (in~\cite{2016j-RylBicFra} only the specific case of hexarotor with a particular orientation pattern is considered), 2) the theoretical proof of the convergence of the controller (in~\cite{2016j-RylBicFra} no proof is given), 3) the analytic  solution of the optimization problem in a relevant case (not given in~\cite{2016j-RylBicFra}), and 4) a set of experiments with a real multi-rotor vehicle that validate the practicability of the method in real scenarios and the improvements with respect to the state of-the-art (in~\cite{2016j-RylBicFra} only a limited set of simulations are presented).

The remaining part of the paper is structured as follows. 
Section~\ref{sec:LBF} presents motivations and the state of the art.
Section~\ref{sec:model} presents the generic model. The full-pose geometric control and prove of the asymptotically exponential tracking are presented in Sec.~\ref{sec:control}. In Sec.~\ref{sec:control_case} we present the full computation of the generic controller in a meaningful case, while results of several experiments are shown in Sec.~\ref{sec:exps}. Finally we conclude the paper and give an outline of further possible extensions in Sec.~\ref{sec:concl}.

\subsection{Motivations and State of the Art}\label{sec:LBF}
In order to use at best the available energy, common multi-rotor platforms are designed with all coplanar rotors. Therefore the direction of the input force applied to the platform center of mass (the total thrust) is also collinear with the spinning axes of the rotors. Being the direction of the total thrust constant in body frame, those platforms are  underactuated. For these platforms several controllers have been proposed in the literature like, e.g., controller based on the dynamic feedback linearization~\cite{2001-MisBenMsi}, cascaded/backstepping controllers~\cite{2002-HamMahLozOst,2005-BouSie}, and geometric controllers on $SE(3)$~\cite{2006-MahChaHam,2010-LeeLeoMcc} (see~\cite{2013-HuaHamMorSam} for a  review of these and other possible strategies).
 
In the recent years new concepts have been developed where the use of non-coplanar propellers~\cite{2007-RomSalSanLoz,2012-VoyJia,2015e-RajRylBueFra}  allows the orientation of the total thrust to deviate from its principal direction, if needed. These platforms present several manifest benefits which  have been already discussed in Sec.~\ref{sec:intro}.
However, in order to minimize the waste of energy caused by the appearance of internal forces, the maximum component of the total thrust along the lateral direction is typically kept (by design) much  lower than the maximum allowed component along the vertical one.
We call these kind of platforms aerial vehicles with \emph{laterally-bounded force} (LBF).

An  LBF platform possesses a \emph{principal} direction of thrust along which most of the thrust can be exerted. A certain amount of thrust (typically smaller) can be exerted along any non-principal (lateral) directions. This model includes the standard  underactuated multi-rotor vehicle where thrust is possible only along the principal direction, and the isotropically fully-actuated platforms where a large amount of total thrust in the lateral directions is applicable~\cite{2016-ParHerJonKimLee,2016-BreDan}.

If the LBF platform is  underactuated then it is not able to track a generic full-pose trajectory, i.e., with independent position and orientation in $SE(3)$. The rotation about any axis that is orthogonal to the principal fixed total thrust direction must follow the evolution over time of the position trajectory, according to the well-known differential flatness property~\cite{2001-MisBenMsi,2011-MelKum}. 
The rotation about the axis that is parallel to the total thrust is instead independent from the position trajectory, therefore an underactuated multi-rotor aerial platform can only track a 4D-pose trajectory (i.e., position plus one angle).

On he contrary, if the LBF platform is fully-actuated then some force can be exerted in the lateral direction thus allowing the tracking of a generic full-pose (6D) trajectory. However due to the bounded thrust along the lateral directions, it is not possible to track \emph{any} full-pose trajectory.
 The larger the bounds the higher the ability of the platform to track any trajectory, the lower the bounds the more the platform resembles an underactuated multi-rotor and thus it becomes almost unable to track a full-pose trajectory but only a 4D-pose one. 

The easiest approach to control fully-actuated platforms is the inverse dynamics approach. First, a \emph{control wrench} is computed in order to track the desired trajectory by cancelling the nonlinear dynamical effects and zero the position and orientation errors. Then the thrust inputs for each propeller are computed from the control wrench by simply inverting the control allocation matrix. This method has been first proposed in~\cite{2015e-RajRylBueFra} and then used also in~\cite{2016-BreDan}\footnote{In~\cite{2016-BreDan} pseudo inversion is used in place of inversion in order to allocate the eighth control inputs of the octo-rotor to attain the six-dimensional wrench.}  and in~\cite{2016-ParHerJonKimLee}.
The limitation of this control approach is to neglect input saturation, which may easily lead to an unstable behavior if, e.g., the full-pose trajectory to be followed is not input-feasible.
Another control approach is presented in~\cite{2007-RomSalSanLoz}, which is however specific to that octorotor platform, it does not consider input bounds either, and is based on a particular Euler angle representation.

In this paper we present a geometric controller that is instead very general and applicable to any LBF vehicle, thus also taking into account the  bounds on the lateral control force.  The method is not prone to local orientation representation singularities since it is natively designed in $SE(3)$.
Furthermore, being not based on pure model inversion like feedback linearization, it is structurally more robust to model uncertainties.

\begin{remark}
It is worth to mention that another way to obtain orientation/position decoupling, could be to use less than six propellers plus additional servomotors to tilt the propellers while flying, as done, e.g., in~\cite{2015-RylBueRob}. This technology is typically referred to as \emph{vectoring thrust}. For example, in~\cite{2015-HuaHamMorSam} and~\cite{2015-RylBueRob} controllers that deal with vectoring thrust aerial vehicles are presented.
One main disadvantage of vectoring thrust platforms is the added complexity and weight due to the additional vectoring motor and mechanism. Another disadvantage is the much slower ability to change the force direction which is due to the small  maximum torque of lightweight servomotors and to the inertia of the motor-propeller group. The latter drawback, in particular, is an insurmountable obstacle for physical interaction application. Therefore we do not consider these platforms in the scope of this paper and we keep the  extension of the proposed controller to vectored thrust platforms as a possible future development.
\end{remark}

\section{Laterally-Bounded Force Aerial Vehicles}\label{sec:model}

With reference to Fig.~\ref{fig:model}, we denote with ${\cal F}_W=O_W,\{\mathbf{x}_W,\mathbf{y}_W,\mathbf{z}_W\}$ and ${\cal F}_B=O_B,\{\mathbf{x}_B,\mathbf{y}_B,\mathbf{z}_B\}$ the  fixed inertial frame, and body frame attached to the aerial platform, respectively. The origin of ${\cal F}_B$, i.e., $O_B$, is chosen coincident with the center of mass (CoM) of the platform and its position in ${\cal F}_W$ is denoted with ${\mathbf p}_{O_B}^W\in\mathbb{R}^3$, shortly indicated with just ${\mathbf p}$ in the following. For the reader's convenience, Table~\ref{tab:symbols} summarizes all the main symbols used in the paper.

\begin{figure}[t]
\centering
\includegraphics[width=1.0\columnwidth]{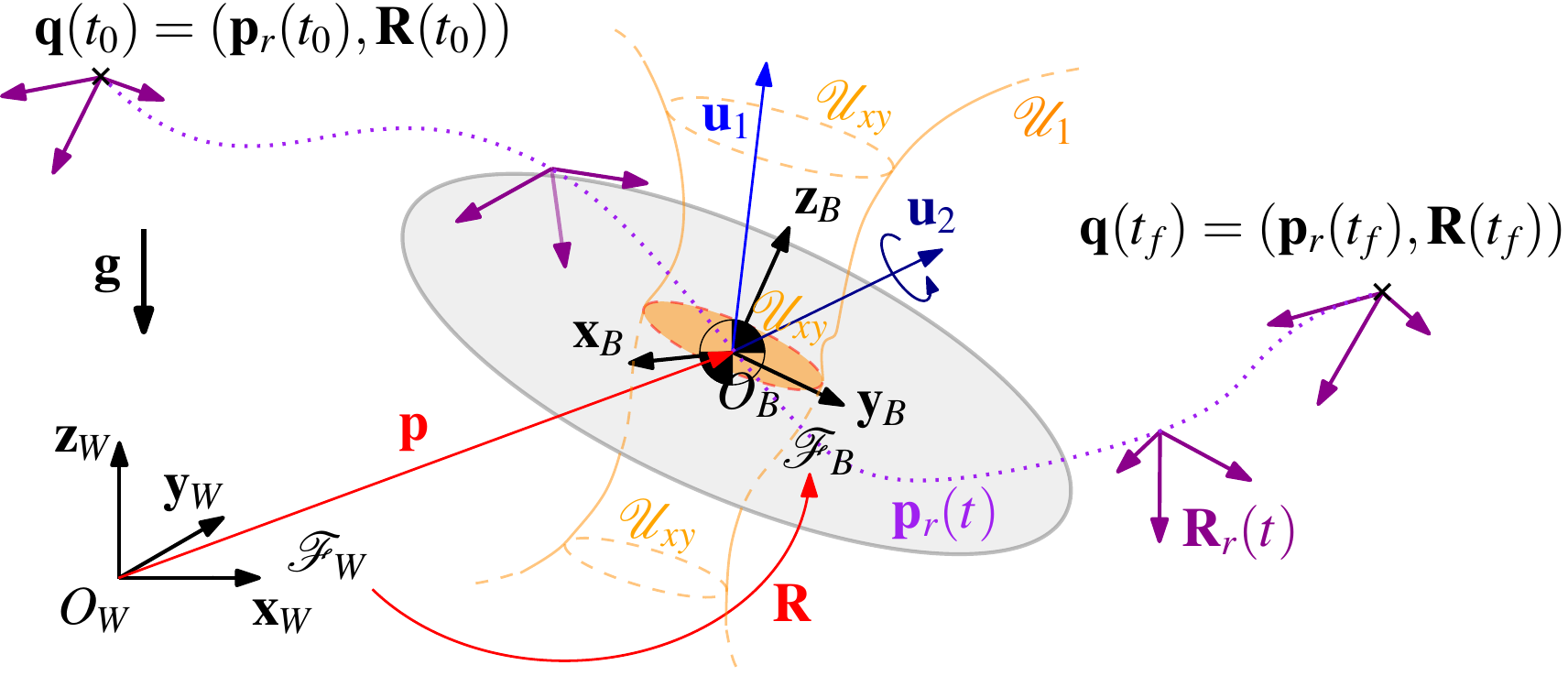}
\caption{A drawing illustrating the main the quantities of an LBF Aerial Vehicle, the main frames involved, the laterally bounded input sets and the full-pose 6D reference trajectory.}
\label{fig:model}
\end{figure}

The aerial platform is modeled as a rigid body whose mass is denoted with $m>0$. The positive definite matrix $\mathbf{J}\in\mathbb{R}^{3\times3}$ denotes the vehicle inertia matrix with respect to $O_B$ expressed in ${\cal F}_B$.
The orientation  of ${\cal F}_B$ with respect to ${\cal F}_W$ is represented by the rotation matrix $\mathbf{R}_B^W\in SE(3)$, shortly denoted with $\mathbf{R}$ in the following. 
The configuration of the aerial vehicle is defined by the position $\mathbf{p}$ and the attitude $\mathbf{R}$, which are gathered in the vehicle configuration $\mathbf{q}=(\mathbf{p},\mathbf{R})\in SE(3)$.

The angular velocity of ${\cal F}_B$ with respect to ${\cal F}_W$, expressed in ${\cal F}_B$, is indicated with ${\bm \omega}_{B,F}^B\in\mathbb{R}^3$, and briefly denoted as $\bm \omega$ in the following.
Thus, the vehicle orientation kinematics is described by:
\begin{align}
\dot{\mathbf{R}} &= \mathbf{R}[{\bm \omega}]_{\times},\label{eq:rot_kinem}
\end{align}
where $[\star]_{\times}:\mathbb{R}^3\to so(3)$ is the map that associates a vector $\star\in\mathbb{R}^3$ to its corresponding skew symmetric matrix.

Let us denote the control inputs of the vehicle with $\mathbf{u}_1 =
[u_1\; u_2 \; u_3]^T\in\mathbb{R}^3$ and $\mathbf{u}_2 =[u_4\; u_5\; u_6]^T\in\mathbb{R}^3$, representing the total force and  total moment  applied to the CoM of the vehicle expressed in ${\cal F}_B$, respectively.
The total force input $\mathbf{u}_1$ is subject to the following constraints 
\begin{align}
[u_1\;u_2]^T &\in {\cal U}_{xy}\subset \mathbb{R}^2,  \label{eq:input_constr_xy}\\
u_3 &\geq 0,  \label{eq:input_constr_z}
\end{align}
where the \emph{laterally bounding} set ${\cal U}_{xy}$ is a set that contains the origin. We define ${\cal U}_1= \{\mathbf{u}_1 \in \mathbb{R}^3\;|\; [u_1\;u_2]^T \in {\cal U}_{xy}, u_3 \geq 0\}$. Note that ${\cal U}_{xy}$ can be constant or even be changing depending  of $u_3$, as shown in Figures~\ref{fig:model} and~\ref{fig:conic_cylindric_inputs}--(left).

Using the Newton-Euler approach we can express the dynamics of the aerial platform as
\begin{align}
m\ddot {\mathbf p} &= -mg \mathbf{e}_3 + \mathbf{R}\mathbf{u}_1
    \label{eq:model_trans}\\
\mathbf{J}\dot	{\bm\omega} &= -{\bm\omega}\times \mathbf{J}{\bm\omega} + \mathbf{u}_2 \label{eq:model_rot}
\end{align}
with $g$ being the gravitational acceleration and $\mathbf{e}_i$, $i=1,2,3$, representing the $i$-th vector of the canonical basis of $\mathbb{R}_3$.

\begin{table}[t]
\caption{Main Symbols used in the paper}
\label{tab:symbols}
\centering
\renewcommand\arraystretch{1.0}
\resizebox{\columnwidth}{!}{
\begin{tabular}{ll}
	\toprule
	\textbf{Definition} & \textbf{Symbol} \\
	\midrule
	World Inertial Frame & ${\cal F}_W$ \\
	Attached Body Frame & ${\cal F}_B$ \\
	Position of the $O_B$, the CoM, in ${\cal F}_W$   & $\mathbf{p}$ \\
	Rotation matrix from ${\cal F}_W$ to ${\cal F}_B$   & $\mathbf{R}$ \\
	Angular velocity of ${\cal F}_B$ w.r.t ${\cal F}_W$ expr. in ${\cal F}_B$  & $\bm{\omega}$ \\
	Mass of the vehicle & $m$  \\
	Vehicle's Inertia matrix w.r.t to $O_B$ expressed in ${\cal F}_B$ & $\mathbf{J}$ \\
	Control force applied to the CoM expressed in ${\cal F}_B$  & $\mathbf{u}_1$ \\
	Control moment applied to the CoM expressed in ${\cal F}_B$  & $\mathbf{u}_2$\\
	Feasible set of the control force $\mathbf{u}_1$  & ${\cal U}_1$  \\
	Feasible set of the projection of $\mathbf{u}_1$ on the $xy$ plane in ${\cal F}_B$  &  ${\cal U}_{xy}$  \\
	\midrule
	Reference position for $O_B$ in ${\cal F}_W$   & $\mathbf{p}_r(t)$ \\
	Reference rotation matrix from ${\cal F}_W$ to ${\cal F}_B$ & $\mathbf{R}_r(t)$ \\
	\midrule 
	Reference control force to be applied to $O_B$   & $\mathbf{f}_r(t)$ \\
	Set of orientations in $SO(3)$ that allow the application of $\mathbf{f}_r(t)$   & ${\cal R}(\mathbf{f}_r)$\\
	Subset of ${\cal R}(\mathbf{f}_r)$ that minimizes a certain cost w.r.t. $\mathbf{R}_r$   & $\overline{\cal R}(\mathbf{f}_r,\mathbf{R}_r)$ \\
	Desired rotation matrix in $\overline{\cal R}(\mathbf{f}_r,\mathbf{R}_r)$    & $\mathbf{R}_d$ \\
	\bottomrule
\end{tabular}
}
\end{table}

\begin{remark}[Underactuated aerial vehicle] \it
When 
\begin{align}
{\cal U}_{xy}=\{\mathbf{0}\}\label{eq:U_xy_underactuated}
\end{align}
the total force is always oriented as $\mathbf{R}\mathbf{e}_3$ and model~\eqref{eq:model_trans}--\eqref{eq:model_rot} becomes the underactuated quadrotor model considered in~\cite{2001-MisBenMsi,2005-BouSie,2010-LeeLeoMcc}, see Fig.~\ref{fig:conic_cylindric_inputs}--(top). 
\end{remark}  

\smallskip  

\begin{remark}[Conic LBF] \it  
When 
\begin{align}
  {\cal U}_{xy}=\{[u_1\; u_2]^T\in \mathbb{R}^2 \; |\; u_1^2+u_2^2 \leq (\tan\alpha)^2u_3^2 \},
\end{align}
model~\eqref{eq:model_trans}--\eqref{eq:model_rot} approximates the case of hexarotors with tilted propellers~\cite{2012-VoyJia,2015e-RajRylBueFra,2016j-RylBicFra}, for which the set of allowable ${\cal U}_1$ forces has the conic shape depicted in Fig.~\ref{fig:conic_cylindric_inputs}-(middle).
The quantity $\alpha$ is a parameter that represents the tilting angle of the propellers (hexarotor).
\end{remark}

\begin{figure}[t]
\centering
\includegraphics[width=0.99\columnwidth]{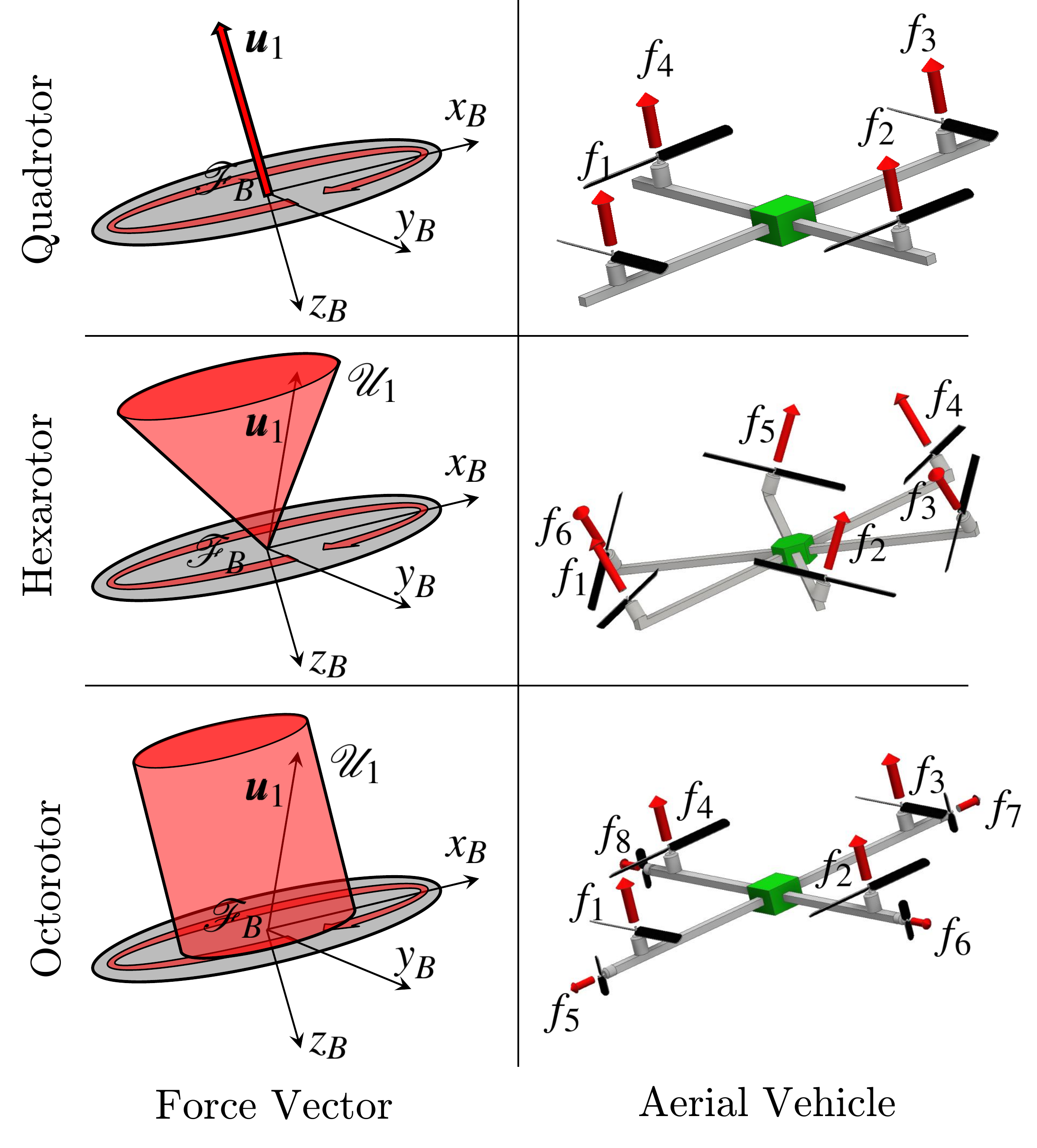}
\caption{Set of allowable total total thrust ${\cal U}_1$ for 3 basic different cases of LBF platforms. Top: a standard quadrotor. Middle: approximation of a hexarotor with tilted propellers. Bottom: approximation of an octorotor with smaller lateral propellers. In the last two cases the sets are an approximation of the real cases, which can be used to simplify the computations. However the proposed control can deal with the real more complex sets as well.}
\label{fig:conic_cylindric_inputs}
\end{figure}

\begin{remark}[Cylindric LBF] \it  
When 
\begin{align}
	\label{eq:r_xy}
  {\cal U}_{xy}=\{[u_1\; u_2]^T\in \mathbb{R}^2 \; |\; u_1^2+u_2^2 \leq r_{xy}^2 \},
\end{align}
model~\eqref{eq:model_trans}--\eqref{eq:model_rot} approximates the case of an octorotor (or, more in general, an $n$-rotor) with four main propellers and four (or $n-4$) secondary less powerful propellers tilted $90$~degrees w.r.t. the main ones, like the one presented in~\cite{2007-RomSalSanLoz}, for which the set of allowable ${\cal U}_1$ forces can be approximated by the cylindric shape depicted in Fig.~\ref{fig:conic_cylindric_inputs}-(bottom). Note that a more accurate (e.g., parallelepipedal) set  can be used if needed, for which the proposed controller is still valid.
The constant parameter $r_{xy}$ represents the maximum lateral thrust allowed by the smaller horizontal propellers. 
\end{remark}

It is worth to stress again that our model is general enough to encompass a much broader set of possible platform than the set given by just considering the previous examples.

\section{Full-Pose Geometric Control on SE(3)}\label{sec:control}

Let be given a full-pose trajectory $\mathbf{q}_r(t)=(\mathbf{p}_r(t),\mathbf{R}_r(t)): [t_0,t_f] \to SE(3)$, where $\mathbf{p}_r(t)\in\mathbb{R}^3$ is the reference position  trajectory and $\mathbf{R}_r(t)\in SO(3)$  is the reference attitude trajectory.
The nominal input required to track $\mathbf{q}_r(t)$ is easily obtained inverting~\eqref{eq:model_trans}-\eqref{eq:model_rot}
\begin{align} 
\mathbf{u}_{1}^r  &= \mathbf{R}_r^T\left(mg \mathbf{e}_3 + m\ddot {\mathbf p}_r\right)
    \label{eq:inver_dyn_u1}\\
\mathbf{u}_2^r  &= {\bm\omega}_r\times \mathbf{J}{\bm\omega}_r + \mathbf{J}\dot	{\bm\omega}_r,\label{eq:inver_dyn_u2}
\end{align}
where $\ddot{\mathbf p}_r=\frac{d^2}{dt^2}{\mathbf p}_r$, and ${\bm\omega}_r$ is defined using~\eqref{eq:rot_kinem}, i.e., $[{\bm \omega}_r]_{\times} = \mathbf{R}_r^T\dot{\mathbf{R}}_r$. 
\begin{dfn}
A reference trajectory $\mathbf{q}_r(t)$ is \emph{feasible} if $\mathbf{u}_{1}^r(t)\in {\cal U}_1$ $\forall t\in [t_0,t_f]$. 
\end{dfn}
Full-pose 6D tracking is possible only if the given reference trajectory $\mathbf{q}_r(t)$ is feasible. However we shall design a controller that works even if  $\mathbf{q}_r(t)$ is not feasible, in the sense that the tracking of $\mathbf{p}_r(t)$ is still guaranteed and no singularity appears. 
Consider the following position and velocity errors
\begin{align} 
\mathbf{e}_p &= \mathbf{p}-\mathbf{p}_r\\
\mathbf{e}_v &= \dot{\mathbf{p}}-\dot{\mathbf{p}}_r,
\end{align}
and the following vector 
\begin{align}
\mathbf{f}_r = m\ddot {\mathbf p}_r +  mg \mathbf{e}_3 - \mathbf{K}_p \mathbf{e}_p - \mathbf{K}_v \mathbf{e}_v,
\end{align}
where $\mathbf{K}_p$ and $\mathbf{K}_v$ are positive definite gain matrices. The vector $\mathbf{f}_r$  represents the reference total control force that ideally one would like to apply to the aerial vehicle CoM if the system  would be completely fully actuated, i.e., if ${\cal U}_1=\mathbb{R}^3$.

The  set of orientations that allow to apply $\mathbf{f}_r$ to the CoM of the LBF aerial vehicle is defined as
\begin{align}
{\cal R}(\mathbf{f}_r)  = \{\mathbf R\in SO(3) \;|\;  \mathbf{R}^T\mathbf{f}_r \in {\cal U}_1\}. 
\end{align}

For an underactuated coplanar multi-rotor system, i.e., if~\eqref{eq:U_xy_underactuated} holds, the set ${\cal R}(\mathbf{f}_r)$ is formed by any $\mathbf{R}$ such that $\mathbf{R}\mathbf{e}_3$ and $\mathbf{f}_r$ are parallel, i.e., $\mathbf{R}\mathbf{e}_3 \times \mathbf{f}_r =0$. For a generic LBF aerial vehicle the set ${\cal R}(\mathbf{f}_r)$ may contain also matrixes for which $\mathbf{R}\mathbf{e}_3 \times \mathbf{f}_r \neq 0$. Therefore we have the following. 
\begin{prop}
For any $\mathbf{f}_r$ it holds ${\cal R}(\mathbf{f}_r)\neq \emptyset$.
\end{prop}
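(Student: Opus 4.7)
The plan is to exhibit an explicit rotation matrix $\mathbf{R}$ that places $\mathbf{R}^T \mathbf{f}_r$ inside ${\cal U}_1$, thereby showing constructively that ${\cal R}(\mathbf{f}_r)$ is nonempty. The key observation is that ${\cal U}_{xy}$ contains the origin by hypothesis, so any vector whose $xy$-component is zero and whose $z$-component is nonnegative automatically lies in ${\cal U}_1$. Hence it suffices to rotate the body frame so that the third body axis becomes aligned with $\mathbf{f}_r$.

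First I would treat the degenerate case $\mathbf{f}_r=\mathbf{0}$ separately: here $\mathbf{R}^T\mathbf{f}_r=\mathbf{0}\in{\cal U}_1$ for every $\mathbf R\in SO(3)$, so ${\cal R}(\mathbf{f}_r)=SO(3)$.

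For $\mathbf{f}_r\neq\mathbf{0}$, I would build $\mathbf R$ explicitly as follows. Set $\mathbf{z}_B=\mathbf{f}_r/\|\mathbf{f}_r\|$, then pick any unit vector $\mathbf{x}_B$ orthogonal to $\mathbf{z}_B$ (which always exists since $\mathbb{R}^3$ has dimension three), and finally let $\mathbf{y}_B=\mathbf{z}_B\times\mathbf{x}_B$. The matrix $\mathbf R=[\mathbf{x}_B\;\mathbf{y}_B\;\mathbf{z}_B]$ is orthogonal with determinant $+1$, so $\mathbf R\in SO(3)$. By construction $\mathbf R\mathbf{e}_3=\mathbf{z}_B$, hence
\begin{align}
\mathbf R^T\mathbf{f}_r = \|\mathbf{f}_r\|\,\mathbf R^T\mathbf{z}_B = \|\mathbf{f}_r\|\,\mathbf{e}_3.
\end{align}
The $xy$-projection of this vector is $\mathbf{0}\in{\cal U}_{xy}$ (since ${\cal U}_{xy}$ contains the origin) and its third component is $\|\mathbf{f}_r\|\geq 0$, so $\mathbf R^T\mathbf{f}_r\in{\cal U}_1$, meaning $\mathbf R\in{\cal R}(\mathbf{f}_r)$.

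There is essentially no obstacle here: the proof is a one-line construction once one notices that aligning the body $z$-axis with $\mathbf{f}_r$ always works, independently of the particular shape of the laterally bounding set ${\cal U}_{xy}$, because we only need the origin of ${\cal U}_{xy}$ and the halfspace condition $u_3\geq 0$. The statement is meant to reassure the reader that the optimization problem that will later select a desired orientation $\mathbf{R}_d\in\overline{\cal R}(\mathbf{f}_r,\mathbf{R}_r)$ is always feasible; more subtle uniqueness or regularity properties of this set are not claimed here and would require separate arguments.
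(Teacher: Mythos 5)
Your proof is correct and follows essentially the same route as the paper's: handle $\mathbf{f}_r=\mathbf{0}$ separately, and otherwise observe that any $\mathbf{R}\in SO(3)$ with $\mathbf{R}\mathbf{e}_3=\mathbf{f}_r/\|\mathbf{f}_r\|$ sends $\mathbf{f}_r$ to $\|\mathbf{f}_r\|\mathbf{e}_3\in{\cal U}_1$ because $\mathbf{0}\in{\cal U}_{xy}$. You merely make the construction of such an $\mathbf{R}$ explicit, which the paper leaves implicit.
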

\begin{proof}
If $\mathbf{f}_r\neq\mathbf{0}$ then, by definition of ${\cal R}(\mathbf{f}_r)$ and ${\cal U}_1$,  
\[
{\cal R}(\mathbf{f}_r) \supset \left\{\mathbf{R}\in SO(3) \,|\, \mathbf{R}\mathbf{e}_3 =\dfrac{\mathbf{f}_r}{\|\mathbf{f}_r\|}\right\} \neq \emptyset  
\]
If $\mathbf{f}_r = \mathbf{0}$ then ${\cal R}(\mathbf{f}_r)= SO(3)$. 
\end{proof}

As one can see from the LBF model the rotational dynamics~\eqref{eq:model_rot} is (completely) fully actuated and decoupled from the translational dynamics~\eqref{eq:model_trans}.
One of the main ideas behind the proposed controller is to exploit a cascaded structure by choosing, at each time $t$, a desired orientation $\mathbf{R}_d\in SO(3)$ that belongs to ${\cal R}(\mathbf{f}_r)$ and also minimizes a given cost function w.r.t.  $\mathbf{R}_r$. Then one can use the fully actuated rotational dynamics to track $\mathbf{R}_d$ and, in turns, track the reference position $\textbf{p}_r$. If the full-pose reference trajectory $\mathbf{q}_r$ is feasible then $\mathbf{R}_d$ will exponentially converge to $\mathbf{R}_r$ and then also the reference orientation will be tracked. Otherwise, only the best feasible orientation will be tracked. Therefore the controller implicitly prioritizes the position trajectory against the orientation one, as wanted. In the following, we shall formally define this controller concept and theoretically prove its convergence properties.
\smallskip

Define $\overline{\cal R}(\mathbf{f}_r,\mathbf{R}_r)\subset{\cal R}(\mathbf{f}_r)$ as the set of rotation matrices that solve the minimization
\begin{align}\label{eq:cost_function}
\min_{\mathbf{R}'\in{\cal R}(\mathbf{f}_r)} J(\mathbf{R}_r,\mathbf{R}'),
\end{align}
where $J:SO(3)\times SO(3) \to \mathbb{R}_{\geq 0}$ is a suitably chosen cost function that represents the degree of similarity between $\mathbf{R}_r$ and $\mathbf{R}'$ one is interested in. The elements in $\overline{\cal R}(\mathbf{f}_r,\mathbf{R}_r)$ represent orientations of the LBF that allow to apply $\mathbf{f}_r$ and minimize the function $J$ w.r.t.  $\mathbf{R}_r$.

Consider that, at each time $t$ a desired orientation $\mathbf{R}_d\in \overline{\cal R}(\mathbf{f}_r,\mathbf{R}_r)$ is chosen. Furthermore, whenever $\mathbf{R}_r\in\overline{\cal R}(\mathbf{f}_r,\mathbf{R}_r)$ then $\mathbf{R}_d$ must bet chosen equal to $\mathbf{R}_r$. 

Then define, as in~\cite{2010-LeeLeoMcc}, the rotation and angular velocity errors
\begin{align} 
\mathbf{e}_R &= \frac{1}{2}(\mathbf{R}_d^T\mathbf{R}-\mathbf{R}^T\mathbf{R}_d)^\vee,\label{eq:rot_error}\\
\mathbf{e}_\omega &= {\bm\omega}-\mathbf{R}\mathbf{R}_d^T{\bm\omega}_d\label{eq:ang_vel_error}
\end{align}
where $\bullet^\vee: so(3)\to \mathbb{R}^3$ is the inverse map of $[\star]_\times$, and ${\bm\omega}_d$ is the angular velocity associated to $\mathbf{R}_d$.
Consider then the following control law
\begin{align}
\mathbf{u}_1 &=  
{\rm sat}_{{\cal U}_{xy}}
\big(
(\mathbf{f}_r^T
\mathbf{R} \mathbf{e}_1) \mathbf{e}_1
+
(\mathbf{f}_r^T\mathbf{R} \mathbf{e}_2) \mathbf{e}_2
\big)
 +(\mathbf{f}_r^T\mathbf{R} \mathbf{e}_3) \mathbf{e}_3\label{eq:u1_ctrl}
\\
\mathbf{u}_2 &= {\bm\omega}\times \mathbf{J}{\bm\omega} - \mathbf{K}_R \mathbf{e}_R - \mathbf{K}_\omega \mathbf{e}_\omega +  \label{eq:u2_ctrl}\\
&~~~~~~~~~~~~~~~~~~ -\mathbf{J}
\big(
[\bm{\omega}]_\times \mathbf{R}^T \mathbf{R}_d \bm{\omega}_d
- \mathbf{R}^T \mathbf{R}_d \dot{\bm{\omega}}_d
\big)
\notag
\end{align}
where ${\rm sat}_{{\cal U}_{xy}}(\mathbf{x})$ is a vector in ${\cal U}_{xy}$ with the same direction of $\mathbf{x}$, that minimizes the distance from $\mathbf{x}$. 
$\mathbf{K}_R=k_R\textbf{I}$ and $\mathbf{K}_\omega=k_\omega\textbf{I}$ are the gain matrices with $k_R>0$ and $k_\omega>0$.%

A nice property is that~\eqref{eq:u1_ctrl}--\eqref{eq:u2_ctrl} reduces to the control in~\cite{2010-LeeLeoMcc} in the case that~\eqref{eq:U_xy_underactuated} holds (underactuated vehicles). However a major difference is in the computation of $\mathbf{R}_d$, which in~\cite{2010-LeeLeoMcc} is computed from the position trajectory using the differential flatness property while in the proposed controller is computed ensuring feasibility of the input and minimizing the cost function w.r.t. the reference orientation $\textbf{R}_r$.

In order to prove the convergence properties of the proposed controller let us consider the following error function between two rotation matrixes $\mathbf{R}_1$ and $\mathbf{R}_2$ to be
\begin{align}
d(\mathbf{R}_1,\mathbf{R}_2) = \frac{1}{2}{\rm tr}\left(\mathbf{I} - \mathbf{R}_2^T\mathbf{R}_1\right).
\end{align}

\begin{thm}\label{thm:convergence_p_r_R_d}
Assume that 
${\bf R}_d(t) \in {\cal R}(\mathbf{f}_r(t))$ for any $t$ and that ${\bm \omega}_d(t)$ and $\dot{{\bm \omega}}_d(t)$ are well defined for any $t$. 
Consider the control ${\bf u}_1$ and ${\bf u}_2$ defined at \eqref{eq:u1_ctrl} and \eqref{eq:u2_ctrl}.

Assume that the initial condition satisfies
\begin{equation}\label{eq:InCondR}
d\left({\bf R}(0), {\bf R}_d(0)\right) < 2,
\end{equation}
and
\begin{equation}\label{eq:InCondErrOmega}
\|{\bf e}_{\omega}(0)\|^2 < \frac{2}{\lambda_{\min}(J)} k_R \, \left(\,1-d\left({\bf R}(0), {\bf R}_d(0)\right)\,\right)
\end{equation}
Then, the zero equilibrium of the tracking errors ${\bf e}_R$, ${\bf e}_{\omega}$, ${\bf e}_p$ and ${\bf e}_v$ is exponentially stable. The region of attraction is characterized by \eqref{eq:InCondR} and \eqref{eq:InCondErrOmega}.
\end{thm}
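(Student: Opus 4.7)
The plan is to exploit the cascade structure that is already implicit in the statement: the rotational subsystem is fully actuated and its errors can be driven to zero exponentially by $\mathbf{u}_2$, and then the translational subsystem sees a vanishing perturbation induced by the mismatch between $\mathbf{R}$ and $\mathbf{R}_d$ and by the lateral saturation.

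First I would handle the rotational subsystem. The law $\mathbf{u}_2$ in \eqref{eq:u2_ctrl} is the standard PD-plus-feedforward attitude controller on $SO(3)$ used by Lee, Leok and McClamroch, with the feedforward term $-\mathbf{J}\bigl([\bm{\omega}]_\times \mathbf{R}^T\mathbf{R}_d\bm{\omega}_d - \mathbf{R}^T\mathbf{R}_d\dot{\bm{\omega}}_d\bigr)$ and the Coriolis cancellation ${\bm\omega}\times \mathbf{J}{\bm\omega}$ yielding the closed-loop error dynamics
\begin{align*}
\mathbf{J}\dot{\mathbf{e}}_\omega = -\mathbf{K}_R \mathbf{e}_R - \mathbf{K}_\omega \mathbf{e}_\omega.
\end{align*}
I would then use the Lyapunov candidate $V_R = \tfrac{1}{2}\mathbf{e}_\omega^T\mathbf{J}\mathbf{e}_\omega + k_R\,d(\mathbf{R},\mathbf{R}_d) + c_R\,\mathbf{e}_R^T\mathbf{e}_\omega$ and show that, for $c_R>0$ sufficiently small, $V_R$ is bounded above and below by quadratic forms in $(\|\mathbf{e}_R\|,\|\mathbf{e}_\omega\|)$ on the sublevel set defined by \eqref{eq:InCondR}--\eqref{eq:InCondErrOmega}. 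This uses the elementary bound $\|\mathbf{e}_R\|^2 \le d(\mathbf{R},\mathbf{R}_d) \le \tfrac{1}{2-d_0}\|\mathbf{e}_R\|^2$ valid whenever $d(\mathbf{R},\mathbf{R}_d)\le d_0 < 2$. Differentiating along trajectories and using the $\mathrm{SO}(3)$ identities $\dot d = \mathbf{e}_R^T\mathbf{e}_\omega$ and $\dot{\mathbf{e}}_R = E(\mathbf{R},\mathbf{R}_d)\mathbf{e}_\omega$ with $\|E\|\le 1$ gives $\dot V_R \le -\gamma V_R$, hence exponential decay $\|\mathbf{e}_R(t)\|, \|\mathbf{e}_\omega(t)\| \le C_1 e^{-\lambda_1 t}$.

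Second, I would tackle the translational subsystem. Substituting $\mathbf{u}_1$ from \eqref{eq:u1_ctrl} into \eqref{eq:model_trans} and using the definition of $\mathbf{f}_r$ produces
\begin{align*}
m\ddot{\mathbf{e}}_p + \mathbf{K}_v \mathbf{e}_v + \mathbf{K}_p \mathbf{e}_p \;=\; \mathbf{R}\mathbf{u}_1 - \mathbf{f}_r \;=:\; \bm{\Delta}(t).
\end{align*}
The crucial observation is that $\bm{\Delta}$ vanishes whenever $\mathbf{R}=\mathbf{R}_d$: in that case $\mathbf{R}_d^T\mathbf{f}_r \in \mathcal{U}_1$ by the hypothesis $\mathbf{R}_d\in\mathcal{R}(\mathbf{f}_r)$, so the lateral saturation in \eqref{eq:u1_ctrl} is inactive and $\mathbf{u}_1 = \mathbf{R}^T\mathbf{f}_r$, giving $\mathbf{R}\mathbf{u}_1 = \mathbf{f}_r$. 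Since $\mathrm{sat}_{\mathcal{U}_{xy}}(\cdot)$ is Lipschitz (as a star-shaped shrinkage onto $\mathcal{U}_{xy}$) and $\mathbf{R}\mapsto \mathbf{R}^T\mathbf{f}_r$ is globally Lipschitz, I would establish a bound of the form
\begin{align*}
\|\bm{\Delta}(t)\| \le L\,\|\mathbf{f}_r(t)\|\,\|\mathbf{R}(t)-\mathbf{R}_d(t)\|_F \le L'\,\|\mathbf{f}_r(t)\|\,\|\mathbf{e}_R(t)\|.
\end{align*}
With the standard composite Lyapunov function $V_T = \tfrac{1}{2} m\|\mathbf{e}_v\|^2 + \tfrac{1}{2}\mathbf{e}_p^T\mathbf{K}_p\mathbf{e}_p + c_T m\,\mathbf{e}_p^T\mathbf{e}_v$ (with $c_T>0$ small) one obtains $\dot V_T \le -\alpha V_T + \beta\|\bm{\Delta}\|^2$, and the total Lyapunov function $V = V_R + \mu V_T$ with $\mu$ large enough yields $\dot V \le -\eta V$, giving exponential stability of the full error on the region \eqref{eq:InCondR}--\eqref{eq:InCondErrOmega}.

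The main obstacle is the perturbation analysis for $\bm{\Delta}$: one must carefully argue that even when the saturation is active the map $\mathbf{R}\mapsto \mathbf{R}\,\mathrm{sat}_{\mathcal{U}_{xy}}(\cdot)+\ldots$ is Lipschitz in $\mathbf{R}-\mathbf{R}_d$ with a constant controlled by $\|\mathbf{f}_r\|$, and that $\|\mathbf{f}_r(t)\|$ remains uniformly bounded along closed-loop trajectories. The latter requires a bootstrap argument: boundedness of $\mathbf{p}_r,\ddot{\mathbf{p}}_r$ from the reference plus boundedness of $(\mathbf{e}_p,\mathbf{e}_v)$ inside the Lyapunov sublevel set keeps $\|\mathbf{f}_r\|$ bounded, which in turn keeps the perturbation gain finite. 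Once this bootstrap is set up, exponential convergence propagates through the cascade in the standard way, and the region of attraction is exactly the one characterized by \eqref{eq:InCondR}--\eqref{eq:InCondErrOmega}.
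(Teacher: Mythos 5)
Your decomposition is the same as the paper's: first show that the attitude error dynamics close to $\mathbf{J}\dot{\mathbf{e}}_\omega = -k_R\mathbf{e}_R - k_\omega\mathbf{e}_\omega$ and converge exponentially, then treat the translational error equation $m\dot{\mathbf{e}}_v = -\mathbf{K}_p\mathbf{e}_p - \mathbf{K}_v\mathbf{e}_v + \bm{\gamma}$ with $\bm{\gamma}=\mathbf{R}\mathbf{u}_1-\mathbf{f}_r$ as a stable linear system driven by an exponentially vanishing perturbation. Where you differ is in the machinery used to glue the two stages: the paper invokes the attitude result of Lee--Leok--McClamroch by citation and then applies an explicit cascade lemma (variation of constants for $\dot{\mathbf{x}}_1 = \mathbf{A}\mathbf{x}_1 + \mathbf{B}\mathbf{x}_2$ with $\mathbf{A}$ Hurwitz and $\mathbf{x}_2$ exponentially decaying), whereas you unpack the attitude Lyapunov function $V_R$ with the cross term $c_R\,\mathbf{e}_R^T\mathbf{e}_\omega$ and then form a single composite function $V = V_R + \mu V_T$. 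Both routes work; the cascade lemma avoids tuning the coupling weight, while your composite function gives a single strict Lyapunov certificate. Notably, the step you flag as ``the main obstacle'' --- showing that $\bm{\gamma}$ actually decays exponentially because the saturated map $\mathbf{R}\mapsto\mathbf{R}\mathbf{u}_1$ is Lipschitz in the attitude error and coincides with $\mathbf{f}_r$ at $\mathbf{R}=\mathbf{R}_d$ (since $\mathbf{R}_d\in\mathcal{R}(\mathbf{f}_r)$ makes the saturation inactive there) --- is precisely the point the paper asserts in one line without argument, so your Lipschitz-plus-bootstrap treatment is a genuine strengthening of the published proof.

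Two small corrections. First, in the composite function $V = V_R + \mu V_T$ the injection of the perturbation into $\dot V_T$ scales with $\mu$ while the decay margin of $V_R$ does not, so you need $\mu$ \emph{small} enough (not large enough) to obtain $\dot V \le -\eta V$; equivalently, weight $V_R$ up rather than $V_T$. Second, your Lipschitz constant for $\mathrm{sat}_{\mathcal{U}_{xy}}$ is clean only when $\mathcal{U}_{xy}$ is a fixed convex set (the cylindric case); when $\mathcal{U}_{xy}$ depends on $u_3$ (the conic case) the radial retraction also varies with the third component of $\mathbf{R}^T\mathbf{f}_r$, and the Lipschitz estimate must account for that dependence. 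Neither issue breaks the argument.
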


\begin{proof}
The proof is divided into two parts. We first show that, if the $\mathbf{R}(0)$ and ${\bf e}_{\Omega}(0)$ satisfy, respectively, \eqref{eq:InCondR} and \eqref{eq:InCondErrOmega}, then $\mathbf{R}(t)$ converges exponentially to $\mathbf{R}_d(t)$, in the sense that the function $d\left({\bf R}(t), {\bf R}_d(t)\right)$ goes exponentially to zero. Secondly, we characterize the error dynamics on the translational dynamics and, 
based on the fact that $\mathbf{R}(t)$ converges exponentially to $\mathbf{R}_d(t)$, we show that also ${\bf e}_p$ and ${\bf e}_v$ goes exponentially to zero.\\
The time derivative of ${\bf e}_\omega$ defined in~\eqref{eq:ang_vel_error} is 
\begin{align}
\mathbf{J} \dot{{\bf e}}_\omega= \mathbf{J} \dot{\bm \omega}+\mathbf{J} \left(\left[{\bm \omega}\right]_\times {\bf R}^T {\bf R}_d {\bm \omega}_d -{\bf R}^T {\bf R}_d \dot{{\bm \omega}}_d \right).
\label{eq:w_err_dynam}
\end{align}
Plugging~\eqref{eq:model_rot} into~\eqref{eq:w_err_dynam} and substituting  ${\bf u}_2$ from~\eqref{eq:u2_ctrl},
we get
$$
\mathbf{J} \dot{{\bf e}}_\omega =-k_R {\bf e}_R-k_\omega {\bf e}_\omega.
$$
In~\cite{2010-LeeLeoMcc}, it is shown, by exhibiting a suitable Lyapunov function, that, under conditions in \eqref{eq:InCondR} and in \eqref{eq:InCondErrOmega}, the zero equilibrium of the attitude tracking error ${\bf e}_R$, ${\bf e}_\omega$ is exponentially stable and that there exist two positive constants $\alpha, \beta$ such that 
\begin{equation}\label{eq:dist_R_Rd}
d\left({\bf R}(t), {\bf R}_d(t)\right) < \alpha e^{-\beta t} d\left({\bf R}(0), {\bf R}_d(0)\right).
\end{equation}

We determine now the error dynamics of the translational dynamics. 
Substituting $\mathbf{u}_1$ from~\eqref{eq:u1_ctrl} in~\eqref{eq:model_trans} we obtain
\begin{align*}
m\ddot {\mathbf p} & = -mg \mathbf{e}_3 + {\bf f}_r+ \mathbf{R}\mathbf{u}_1-  {\bf f}_r
 = m\ddot {\mathbf p}_r   - \mathbf{K}_p \mathbf{e}_p - \mathbf{K}_v \mathbf{e}_v + \bm \gamma,
\end{align*}
where $\gamma= \mathbf{R}\mathbf{u}_1-  {\bf f}_r$. It easily follows that
\begin{align}
m\dot {\mathbf e}_v &=  - \mathbf{K}_p \mathbf{e}_p - \mathbf{K}_v \mathbf{e}_v + \bm \gamma.
\label{eq:ErrorTrasDyn}
\end{align}
Consider \eqref{eq:dist_R_Rd} and observe that, since ${\bf R}_d \in {\cal R}(\mathbf{f}_r)$ for any $t$, we have that there exist two positive constants $C$, $\rho$ such that
$$
\|\bm \gamma(t) \| \leq C e^{-\rho t} \|\bm \gamma(0)\|.
$$
Let ${\bf x}= \left[{\bf e}_v \,\,\,\,{\bf e}_p  \right]^T$ then, \eqref{eq:ErrorTrasDyn} can be written in vector form as
\begin{equation}\label{eq:ErrorTraDynVector}
\dot{{\bf x}}={\bf A} {\bf x}+ {\bf B} {\bm \gamma}
\end{equation}
where 
$$
{\bf A}=\left[
\begin{array}{cc}
- \mathbf{K}_v  & - \mathbf{K}_p \\
{\bf I} & {\bf 0} 
\end{array}
\right], \qquad {\bf B}=\left[
\begin{array}{c}
{\bf I} \\
{\bf 0} 
\end{array}
\right].
$$
Since $\mathbf{K}_v, \mathbf{K}_p$ are both positive definite matrices, we have that ${\bf A}$ is a Hurwitz matrix.

Observe that \eqref{eq:ErrorTraDynVector} is the cascade of a linear stable system and an exponential stable signal. Then we can apply Lemma \ref{lem:cascade_system} (reported in Appendix) obtaining the result stated in the Theorem.
\end{proof}

\begin{figure}[t]
\centering
\includegraphics[width=\columnwidth]{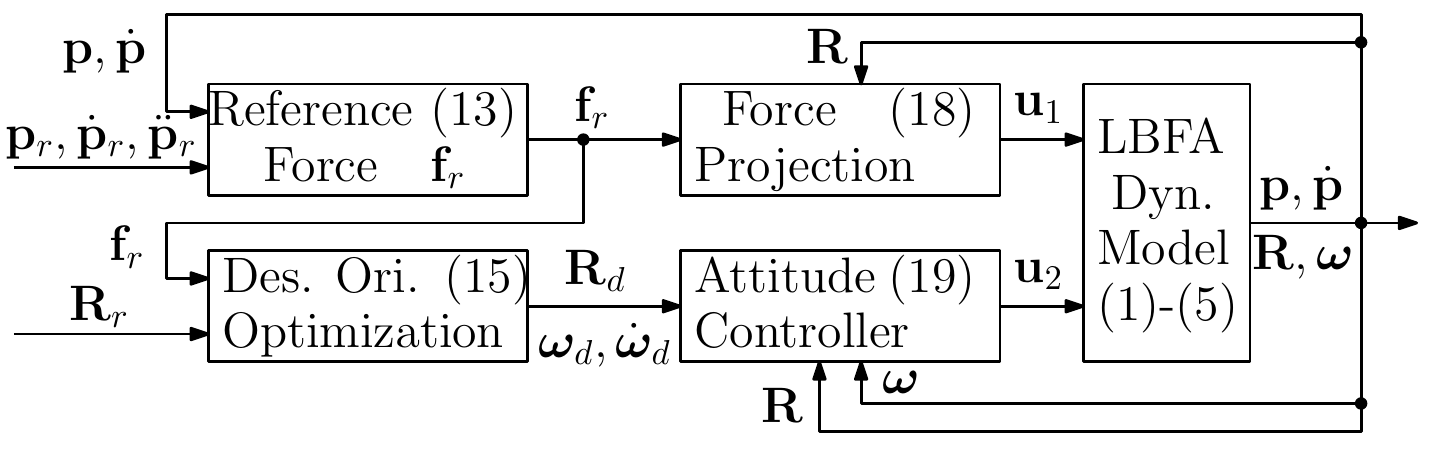}
\caption{Block diagram of the proposed geometric controller with the references to the corresponding equations in the text.}
\label{fig:block_ctrl}
\end{figure}

A block diagram that shows the main subsystems of the proposed control architecture is provided in Fig.~\ref{fig:block_ctrl}.

Theorem~\ref{thm:convergence_p_r_R_d} ensures, under mild conditions, the exponential stability of $\mathbf{e}_p$, $\mathbf{e}_v$, $\mathbf{e}_R$, and $\mathbf{e}_\omega$. Notice that this results holds regardless of the feasibility of $\mathbf{q}_r$. If $\mathbf{q}_r$ is also feasible then exponential tracking of $\mathbf{q}_r$ by $\mathbf{q}$ is also guaranteed. In order to formally state this fact let us define the following errors
\begin{align} 
\mathbf{e}_{R_r} &= \frac{1}{2}(\mathbf{R}_r^T\mathbf{R}_d-\mathbf{R}_d^T\mathbf{R}_r)^\vee,\label{eq:rot_ref_error}\\
\mathbf{e}_{\omega_r} &= {\bm\omega}_d-\mathbf{R}_d\mathbf{R}_r^T{\bm\omega}_r.\label{eq:ang_vel_ref_error}
\end{align}

In next result we characterize the convergence of the above errors to zero provided that the reference trajectory $\mathbf{q}_r(t)$ is \emph{feasible} and satisfies the additional property that $\mathbf{u}_{1}^r$ is \emph{sufficiently inside} ${\cal U}_1$, meaning that there exists a time instant $\bar t$ and a positive number $\epsilon$ such that the distance of $\mathbf{u}_{1}^r$ from the boundary of ${\cal U}_1$ is greater than $\epsilon>0$ for all $t > \bar{t}$, i,e,
\begin{equation}\label{eq:AddProp}
\text{dist}\left(\mathbf{u}_{1}^r(t), \partial {\cal U}_1\right)>\epsilon , \qquad \forall \, t> \bar{t}.
\end{equation}

\begin{thm}\label{prop:convergence_R_d_R_r}
Assume $\mathbf{q}_r(t)$ is a feasible trajectory and that it satisfies the additional property in \eqref{eq:AddProp}. 
Assume that 
${\bf R}_d(t) \in {\cal R}(\mathbf{f}_r(t))$ for any $t$ and that ${\bm \omega}_d(t)$ and $\dot{{\bm \omega}}_d(t)$ are well defined for any $t$. Consider the control ${\bf u}_1$ and ${\bf u}_2$ defined at \eqref{eq:u1_ctrl} and \eqref{eq:u2_ctrl}. 
Assume that the initial condition satisfies \eqref{eq:InCondR} and \eqref{eq:InCondErrOmega}.
Then the zero equilibrium of the tracking errors ${\bf e}_R$, ${\bf e}_{\omega}$, ${\bf e}_p$ and ${\bf e}_v$ is exponentially stable and there exists a time instant $\bar{t}\geq t_0$ such that $\mathbf{e}_{R_r}(t)=\mathbf{e}_{\omega_r}(t)=0$ for all $t >\bar{t}$. 
The region of attraction is characterized by \eqref{eq:InCondR} and \eqref{eq:InCondErrOmega}.
\end{thm}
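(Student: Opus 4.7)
The plan is to leverage Theorem~\ref{thm:convergence_p_r_R_d} for all the heavy lifting and then argue that, under feasibility and strict interiority of $\mathbf{u}_{1}^r$, the desired attitude $\mathbf{R}_d$ must eventually coincide with $\mathbf{R}_r$ in finite time, after which $\mathbf{e}_{R_r}$ and $\mathbf{e}_{\omega_r}$ are identically zero.

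First I observe that the hypotheses of Theorem~\ref{thm:convergence_p_r_R_d} are included in those of the present statement, so the exponential convergence of $\mathbf{e}_R$, $\mathbf{e}_\omega$, $\mathbf{e}_p$, $\mathbf{e}_v$ is already granted. In particular $\mathbf{e}_p, \mathbf{e}_v$ decay exponentially, and consequently so does the residual
\begin{align*}
\mathbf{R}_r^T\mathbf{f}_r - \mathbf{u}_{1}^r \;=\; -\mathbf{R}_r^T(\mathbf{K}_p \mathbf{e}_p + \mathbf{K}_v \mathbf{e}_v),
\end{align*}
obtained from the definition of $\mathbf{f}_r$, from~\eqref{eq:inver_dyn_u1}, and from orthogonality of $\mathbf{R}_r$.

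The second step uses the interiority condition~\eqref{eq:AddProp}: after time $\bar t$, the nominal input $\mathbf{u}_{1}^r(t)$ sits inside ${\cal U}_1$ with a margin of at least $\epsilon$. The exponential decay of the perturbation above then guarantees the existence of a finite $t^{*}\geq\bar t$ beyond which $\|\mathbf{R}_r^T(\mathbf{K}_p\mathbf{e}_p+\mathbf{K}_v\mathbf{e}_v)\|<\epsilon$. For every $t>t^{*}$ one therefore has $\mathbf{R}_r^T\mathbf{f}_r(t)\in{\cal U}_1$, i.e., $\mathbf{R}_r(t)\in{\cal R}(\mathbf{f}_r(t))$. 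Because $J(\mathbf{R}_r,\cdot)$ attains its minimum at $\mathbf{R}_r$, this inclusion immediately gives $\mathbf{R}_r\in\overline{\cal R}(\mathbf{f}_r,\mathbf{R}_r)$, and the controller's tie-breaking rule forces $\mathbf{R}_d(t)=\mathbf{R}_r(t)$ on that time window. Substituting into~\eqref{eq:rot_ref_error}--\eqref{eq:ang_vel_ref_error} yields $\mathbf{e}_{R_r}(t)=\mathbf{e}_{\omega_r}(t)=\mathbf{0}$, which is precisely the additional claim.

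The main conceptual obstacle is the implicit coupling between $\mathbf{R}_d$ and $\mathbf{f}_r$: invoking Theorem~\ref{thm:convergence_p_r_R_d} requires $\mathbf{R}_d\in{\cal R}(\mathbf{f}_r)$ together with well-defined $\bm{\omega}_d$ and $\dot{\bm{\omega}}_d$ throughout the evolution, whereas the argument above switches $\mathbf{R}_d$ to $\mathbf{R}_r$ at $t^{*}$. I would handle this by noting that the design rule always selects $\mathbf{R}_d\in\overline{\cal R}(\mathbf{f}_r,\mathbf{R}_r)\subset{\cal R}(\mathbf{f}_r)$, so the inclusion hypothesis of Theorem~\ref{thm:convergence_p_r_R_d} holds uniformly in $t$; and that once the choice $\mathbf{R}_d=\mathbf{R}_r$ becomes admissible the angular-velocity signals reduce to $\bm{\omega}_r, \dot{\bm{\omega}}_r$, which are well defined by the assumed smoothness of the reference $\mathbf{q}_r(t)$, so no loss of regularity occurs across the transition.
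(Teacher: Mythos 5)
Your proposal is correct and takes essentially the same route as the paper: both invoke Theorem~\ref{thm:convergence_p_r_R_d}, note that $\mathbf{R}_r^T\mathbf{f}_r-\mathbf{u}_1^r=-\mathbf{R}_r^T(\mathbf{K}_p\mathbf{e}_p+\mathbf{K}_v\mathbf{e}_v)$ decays exponentially, and use the $\epsilon$-interiority of $\mathbf{u}_1^r$ in \eqref{eq:AddProp} to conclude that $\mathbf{R}_r^T\mathbf{f}_r\in{\cal U}_1$ after a finite time, whence the tie-breaking rule forces $\mathbf{R}_d=\mathbf{R}_r$ and the errors \eqref{eq:rot_ref_error}--\eqref{eq:ang_vel_ref_error} vanish. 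Your write-up is in fact slightly more explicit than the paper's on the tie-breaking step and on the well-definedness of $\bm{\omega}_d$ across the transition.
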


\begin{proof}
From Theorem~\ref{thm:convergence_p_r_R_d} we can write that
\begin{align}
\mathbf{f}_r = m\ddot {\mathbf p}_r +  mg \mathbf{e}_3 + \bm \xi
\end{align}
where 
\begin{equation}\label{eq:PropertyXi}
\|\bm\xi(t)\| \leq L e^{-\lambda t}\|\bm\xi(0)\|
\end{equation} 
for some positive constants $L$ and $\lambda$. This implies that the vector $\mathbf{f}_r- (m\ddot {\mathbf p}_r +mg \mathbf{e}_3)$ and, in turn, also the vector 
${\bf R}_r^T\mathbf{f}_r- {\bf R}_r^T(m\ddot {\mathbf p}_r + mg \mathbf{e}_3)$, tend exponentially to zero.

Hence, since $\mathbf{q}_r(t)$ is feasible and satisfies \eqref{eq:AddProp}, it follows, from continuity arguments, that there exists $t'$such that $\mathbf{R}_r^T  \mathbf{f}_r  \in {\cal U}_1$ for all $t \geq t'$. Therefore ${\bf R}_d(t)= {\bf R}_r(t)$ for all $t > t'$.
\end{proof}

\begin{remark}
The proposed controller (in particular the attitude controller~\eqref{eq:u2_ctrl}) relies on the availability of $\bm{\omega}_d$, and $\dot{\bm{\omega}}_d$. These quantities depend in turn on $\mathbf{R}_d$ which is the output of an optimization algorithm executed at each control step. In order for $\bm{\omega}_d$ and $\dot{\bm{\omega}}_d$ to be well defined and available the optimization must ensure a sufficient smoothness of $\mathbf{R}_d$. This could be enforced by adding, e.g., a regularization term in the cost function $J$. 
If in the real case at hand this is not possible (or not implementable), then at each time instant in which $\mathbf{R}_d$ is not smooth the attitude controller will undergo a new transient phase. In practice, see Sec.~\ref{sec:exps}, we have experimentally ascertain that the presence of a few isolated non-smooth instants  does not constitute at all a real problem for the stability of the implementation and that regularization is actually not needed for practical stabilization.
\end{remark}

\section{Computation of $\mathbf{R}_d$ for an Important Case}\label{sec:control_case}

The proposed control method is kept on purpose general regarding two main features: the choice of ${\cal U}_{xy}$ in~\eqref{eq:input_constr_xy} and the choice of $J$ in~\eqref{eq:cost_function}. The former allows the method to be used for a large set of aerial vehicles with different actuation capabilities. The latter allows the engineer to customize the definition of similarity between two orientations in order to comply with the particular task at hand. In this section we illustrate how these two general features are particularized to a specific meaningful case.

In particular, we consider the case of ${\cal U}_{xy}$ defined in~\eqref{eq:r_xy} and the following choice of cost function $J$
\begin{align}\label{eq:cost_b3}
J(\mathbf{R}_r,\mathbf{R}') = 1 - \mathbf{b}^T_{3r}\mathbf{b}_3',
\end{align}
where 
$\mathbf{R}_r = [\mathbf{b}_{1r}\, \mathbf{b}_{2r}\, \mathbf{b}_{3r}]$ and $\mathbf{R}' = [\mathbf{b}_{1}'\, \mathbf{b}_{2}'\, \mathbf{b}_{3}']$. The cost function $J$ in~\eqref{eq:cost_b3} is minimized whenever $\mathbf{b}_{3r}=\mathbf{b}_{3}'$ and maximized whenever $\mathbf{b}_{3r}=-\mathbf{b}_{3}'$. 

\begin{remark} \it  
For example in the case that the aerial vehicle embeds a down-facing camera,  the cost~\eqref{eq:cost_b3} can be used to emphasize the fact that:
\begin{inparaenum}
\item while following the position trajectory it is important to orient the sagittal axis of the camera as prescribed by $\mathbf{b}_{3r}$, but 
\item the particular  rotation about the sagittal axis has no relevance for the specific camera-observation task.
\end{inparaenum}
\end{remark}

In the following we show how it is possible to efficiently compute an $\mathbf{R}_d$ that belongs to $\overline{\cal R}(\mathbf{f}_r,\mathbf{R}_r)$ and is also equal to $\mathbf{R}_r$ if $\mathbf{R}_r\in\overline{\cal R}(\mathbf{f}_r,\mathbf{R}_r)$. These are in fact the requirements needed for $\mathbf{R}_d$ in order for Theorems~\ref{thm:convergence_p_r_R_d} and~\ref{prop:convergence_R_d_R_r} to be valid.

Let us start by instantiating ${\cal R}(\mathbf{f}_r)$ for this particular case. From simple geometrical considerations on the cylindrical shape of the set ${\cal U}_1$  it is easy to see that the following definition of ${\cal R}(\mathbf{f}_r)$ holds
\begin{align}\label{eq:feasible_set_case}
{\cal R}(\mathbf{f}_r) = \left\{\mathbf R'\in SO(3) \;|\; \mathbf{f}^T_r\mathbf{b}_3' \geq \sqrt{\|\mathbf{f}_r\|^2 - r_{xy}^2 }\right\},
\end{align}
which states that the vector $\mathbf{f}_r$ must lie within the cylinder of radius $r_{xy}$ generated about the axis $\mathbf{b}_3'$ (see Fig.~\ref{fig:feasiblity_case}).
 
 \begin{figure}[t]
\centering
\includegraphics[width=0.42\columnwidth]{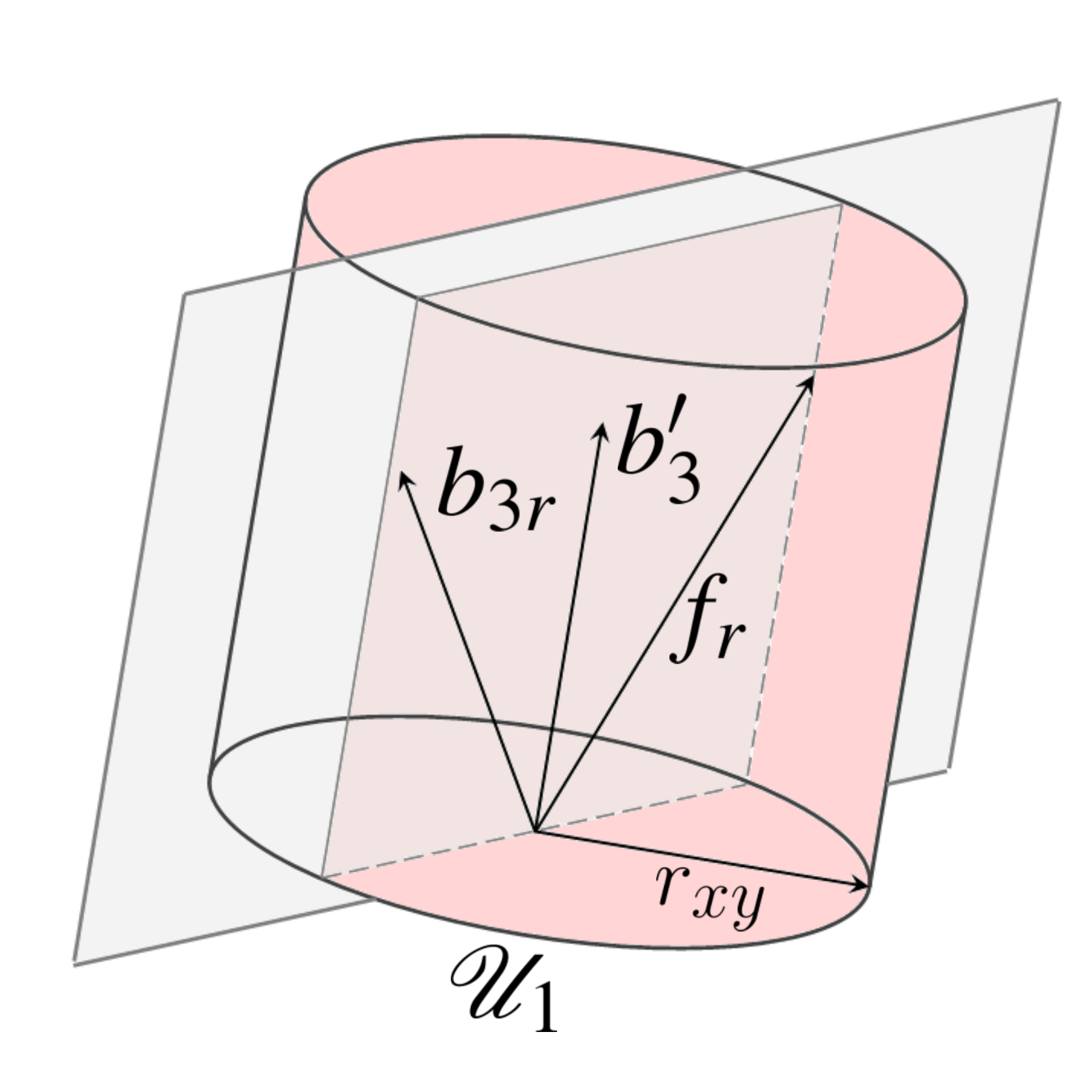}
\caption{Geometrical interpretation of the feasibility constraint in~\eqref{eq:feasible_set_case}.}
\label{fig:feasiblity_case}
\end{figure}

Using~\eqref{eq:cost_b3} and \eqref{eq:feasible_set_case} we can rewrite~\eqref{eq:cost_function} in terms of the only vector variable $\mathbf{b}_3'$, instead of the whole matrix $\mathbf{R}'$, as
\begin{align}\label{eq:cost_function_case}
\min_{
\substack{
\mathbf{f}^T_r\mathbf{b}_3' \geq \sqrt{\|\mathbf{f}_r\|^2 - r_{xy}^2 },\\
\|\mathbf{b}_3'\|^2 = 1
}
}
\; - \mathbf{b}^T_{3r}\mathbf{b}_3',
\end{align}
where $r_{xy}^2$, $\mathbf{f}^T_r$ and $\mathbf{b}^T_{3r}$ are the givens of the problem.

In the case that $\mathbf{f}^T_r\mathbf{b}_{3r} \geq \sqrt{\|\mathbf{f}_r\|^2 - r_{xy}^2 }$ then $\mathbf{b}_3'=\mathbf{b}_{3r}$ is the solution to~\eqref{eq:cost_function_case}. Otherwise, let us write $\mathbf{b}_3'$ as the sum of two components $\mathbf{b}_3'=\mathbf{b}_{3\parallel}' + \mathbf{b}_{3\perp}'$, where $\mathbf{b}_{3\parallel}'$ is parallel to the plane spanned by $\mathbf{b}_{3r}$
and $\mathbf{f}_r$, while
$\mathbf{b}_{3\perp}'$ is perpendicular to it, i.e., is parallel to $\mathbf{b}_{3r}\times \mathbf{f}_r$.
It is easy to see that the cost function in~\eqref{eq:cost_function_case}, i.e., $\mathbf{b}^T_{3r}\mathbf{b}_3'$, is not affected by $\mathbf{b}_{3\perp}'$, in fact 
\[
\mathbf{b}^T_{3r}\mathbf{b}_3' = \underbrace{\mathbf{b}^T_{3r}\mathbf{b}_{3\perp}'}_{=0} + \mathbf{b}^T_{3r}\mathbf{b}_{3\parallel}' = \mathbf{b}^T_{3r}\mathbf{b}_{3\parallel}'.
\]

The vector $\mathbf{b}_{3\parallel}'$ can be written using the Rodrigues's rotation formula as
\[
\mathbf{b}_{3\parallel}'(\theta) = \mathbf{b}_{3r} \cos\theta + (\mathbf{k} \times \mathbf{b}_{3r})\sin\theta + \mathbf{k} (\mathbf{k} \cdot \mathbf{b}_{3r}) (1 - \cos\theta),
\]
where $\mathbf{k}=\frac{\mathbf{b}_{3r}\times \mathbf{f}_r}{\|\mathbf{b}_{3r}\times \mathbf{f}_r\|}$ and $\theta$ is the rotation angle that univocally defines $\mathbf{b}_{3\parallel}'$. Noting that the constraint $\|\mathbf{b}_3'\|^2 = 1$ is automatically verified by $\mathbf{b}_{3\parallel}'(\theta)$ for any $\theta$, then we can further simplify~\eqref{eq:cost_function_case} in terms of the only scalar variable $\theta$ as
\begin{align}\label{eq:cost_function_case_theta}
\min_{
\mathbf{f}^T_r\;\mathbf{b}_{3\parallel}'(\theta) \geq \sqrt{\|\mathbf{f}_r\|^2 - r_{xy}^2 }
}
\; - \mathbf{b}^T_{3r}\;\mathbf{b}_{3\parallel}'(\theta).
\end{align}
The minimization prolem~\eqref{eq:cost_function_case_theta} can be efficiently solved numerically using the bisection method shown in Algorithm~\ref{algo:bisection}.

In order to finally compute $\mathbf{R}_d$ from $\mathbf{b}_{3d}$ we suggest to employ the following formula,  as done in~\cite{2010-LeeLeoMcc}:
\begin{align}\label{eq:computation_R_d_case}
\mathbf{R}_d = 
\Bigg[
\underbrace{(\mathbf{b}_{3d} \times \mathbf{b}_{1r})\times \mathbf{b}_{3d}}_{\mathbf{b}_{1d}} \quad \underbrace{\mathbf{b}_{3d} \times \mathbf{b}_{1r}}_{\mathbf{b}_{2d}} \quad \mathbf{b}_{3d}
\Bigg].
\end{align}

Finally, we note that if $\mathbf{R}_r\in\overline{\cal R}(\mathbf{f}_r,\mathbf{R}_r)$ then $\mathbf{f}^T_r\mathbf{b}_{3r} \geq \sqrt{\|\mathbf{f}_r\|^2 - r_{xy}^2 }$ which, as we previously said, implies that $\mathbf{b}_{3d}=\mathbf{b}_{3r}$. Then, from~\eqref{eq:computation_R_d_case} it results $\mathbf{R}_d=\mathbf{R}_r$, thus allowing to fulfill also the second requirement on the computation of $\mathbf{R}_d$, besides the minimization~\eqref{eq:cost_function}.

\begin{algorithm}[t]
\caption{{\it Bisection used to solve problem \eqref{eq:cost_function_case_theta}}}

\nllabel{algo:bisection}                  

\small{

\KwData{$n$  (number of iterations $\propto$ solution accuracy)}

\KwData{$\mathbf{b}_{3r}$, $\mathbf{f}_r$, and $r_{xy}$ (givens of the problem)}

$\mathbf{k} \leftarrow \frac{\mathbf{b}_{3r}\times \mathbf{f}_r}{\|\mathbf{b}_{3r}\times \mathbf{f}_r\|}$

$\theta_{max} \leftarrow  \arcsin(\|\mathbf{k}\|)$\;

$\theta \leftarrow  \theta_{max}/2$\; 

\For{$i=1$ to $n$}
{
	
	\eIf{$\quad\mathbf{f}^T_r\;\mathbf{b}_{3\parallel}'(\theta) \geq \sqrt{\|\mathbf{f}_r\|^2 - r_{xy}^2}\quad$}
	{
            $\theta \leftarrow \theta - \frac{1}{2}\frac{\theta_{max}}{2^i}$\;  
	}{
            $\theta \leftarrow \theta + \frac{1}{2}\frac{\theta_{max}}{2^i}$\; 
	}

}

\Return $\theta$

}

\end{algorithm}

It is worth to notice that the described algorithm takes a negligible time to be run on a standard computer, thus allowing a real time numerical control implementation at frequencies that are way below $1$\,ms for each control loop, if needed by the application.

In case of different sets ${\cal U}_{xy}$ and different cost functions $J$ either similar efficient approaches can be used or the method presented here can be used as a conservative approximation.

\section{Experiments}\label{sec:exps}

In this section we present and discuss the results of different experiments performed with an LBF aerial vehicle. 
The experimental setup used to conduct the experiments is introduced in Sec.~ \ref{sec:ExperimentalSetup} while in Sec.~\ref{sec:ExperimentalResults} we present and discuss thoroughly the different experiments.

\subsubsection{Experimental Setup}
\label{sec:ExperimentalSetup}
The LBF platform used to perform the experiments is the Tilt-Hex robot, an in-house developed fully actuated vehicle (see Fig.~\ref{fig:Tilt-Hex}). The Tilt-Hex is an example of platform that can be controlled by the full-pose geometric controller presented in Sec.~\ref{sec:control}. We  carefully chose the Tilt-Hex to perform the  experiments as the platform is able to emulate different values of $r_{xy}\in [0, r_{xy_{max}}]$ (see~\eqref{eq:r_xy}), which means that we can emulate seamlessly underactuated and fully actuated platforms. 
Notice that even if the Tilt-Hex is an hexarotor 
that implements the concept depicted in Fig.~\ref{fig:conic_cylindric_inputs}-(middle), we can emulate for validation purposes the  cylindric force constraint of Fig.~\ref{fig:conic_cylindric_inputs}-(bottom) by choosing $r_{xy}$ small enough and avoiding trajectories that require extreme values of $u_3$, see Fig.~\ref{fig:cylinder_in_cone}.

 All components of the Tilt-Hex are off-the-shelf available or 3D printable. The diameter of the Tilt-Hex (distance between two propeller hubs) is $\SI{0.8}{\meter}$. The total mass, including a \SI{2200}{\milli\ampere\hour} battery, is $m=\SI{1.8}{\kilo\gram}$. The propellers are tilted first about the axis that connects the rotor with the center of the hexarotor shape ($\alpha = \pm\SI{35}{\degree}$), then about the axis that is perpendicular to the previous rotation axis and $\vect{e}_3$ ($\beta = \SI{25}{\degree}$). These angles ensure a balanced selection between large lateral forces and inefficient losses as a result of internal forces.
 
  \begin{figure}[t]
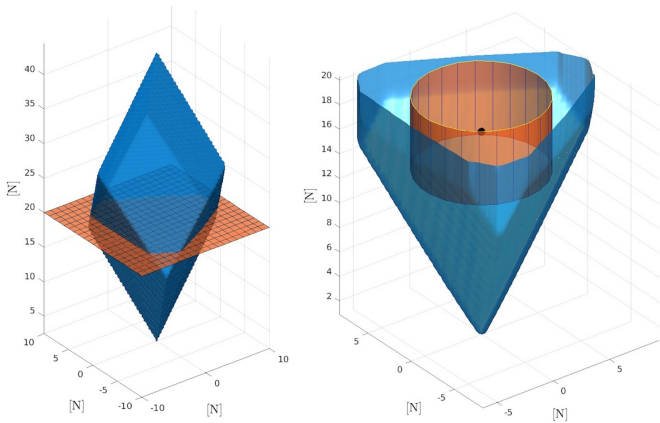

\centering
\figForceVolumeFull\hfill
\figForceVolume
\caption{Left: The blue volume encloses the set of feasible forces, obeying the constraints of minimal and maximum rotor spinning velocity for the Tilt-Hex. The red plane visualizes the cut of the sectional view of the plot on the right. Right: lower part of the cut of the left figure. The red cylinder visualizes the volume of the imposed cylindric force constraint. Notice that the cylinder is fully inside the volume of feasible forces. The black dot in the center visualizes the force needed to hover horizontally and therefore represents the nominal working point.}
\label{fig:cylinder_in_cone}
\end{figure}

The \SI{30.5}{\centi\meter} propeller blades are powered by MK3638 brushless motors provided by MikroKopter. The maximum lift force of a single motor-propeller combination is $\SI{12}{\newton}$. The electronic speed controller (ESC) driving the motor is a BL-Ctrl-2.0 from MikroKopter. The controller, running on the ESC, is an in-house development~\cite{2017c-FraMal} that controls the propeller rotational speed in closed loop at a variable frequency (e.g., when the propeller rotation speed is \SI{70}{\hertz} the control frequency is \SI{3.29}{\kilo\hertz}). An inertial measurement unit (IMU) provides accelerometer and gyroscope measurements ($\SI{500}{\hertz}$) and a marker-based motion capture system provides position and orientation measurements ($\SI{100}{\hertz}$) of the platform. The motion capture and IMU measurements are fused via a UKF state estimator to obtain the full state at $\SI{500}{\hertz}$. 

A PnP-algorithm and an onboard camera could easily be utilized to replace the motion capture system, as done for a similar system in~\cite{2017h-SanAreTogCamFra}, by some of the authors. However it is preferable to validate the proposed controller using the motion capture and the IMU because in this way we minimize the influence of the particular perception system used on the controller performances and the evaluation results more fair. 

Finally, the controller has been implemented in Matlab-Simulink and runs at $\SI{500}{\hertz}$ on a stationary base PC connected to the Tilt-Hex through a serial cable. As the computational effort of the controller is very low (considerably below \SI{1}{\milli\second} per control loop) it could be ported easily to an on-board system. Based on our experience with a similar porting, we expect the performances of an onboard implementation to be much better than the Matlab-Simulink implementation, thanks to the possibility of reaching a faster control frequency (larger than $\SI{1}{\kilo\hertz}$) and almost real-time capabilities (latency below \SI{1}{\milli\second}). Therefore the experiments proposed in Section~\ref{sec:ExperimentalResults} represent a worst case scenario from this point of view, in the sense that an onboard implementation can only perform better than what we tested.

\subsubsection{Experimental Validations}
\label{sec:ExperimentalResults}

Three different experimental validations have been conducted. The task of of all the experiments is to follow at best a given reference pose trajectory $\vect{q}_r(t)=(\vect{p}_r(t),\vect{R}_r(t))$, i.e., position and orientation in function of time~$t$. In the first experimental validation, called \emph{Experimental Batch~1} the value of $r_{xy}$ in~\eqref{eq:r_xy} and of $\vect{R}_r$ is kept constant. The Experimental Batch~1 is composed by three experiments: Exp.\,1.1, Exp.\,1.2 and Exp.\,1.3, which are detailed in the following.
Instead in the second validation, denoted with Exp.\,2,  $\vect{R}_r$ varies over time. Finally in the third validation, referred to as Exp.~3 is $r_{xy}$ that varies over time. 

For the reader's convenience the rotation matrices used internally by the controller have been converted to \textit{roll-pitch-yaw} angles, with the convention $\mathbf{R}_{\bullet} \to \phi_{\bullet}, \theta_{\bullet}, \psi_{\bullet}$ in all plots. However, the internal computations are all done with rotation matrices. In plots where data are very noisy a filtered version (darker color) is presented together with the original data (lighter color in background), as, e.g. in the last row of Fig.~\ref{fig:ExpOnePosition}.

The interested reader is referred to the multimedia attachment of this paper to fully enjoy the videos all the experiments. Furthermore, all the experimental data (with suitable scripts to plot them) are provided for download at  the the following link \url{http://homepages.laas.fr/afranchi/files/2017/dataset1.zip}.

\subsection{Experimental Batch~1}

In the Experimental Batch~1 the translational part of the reference trajectory, i.e., $\vect{p}_r(t)=[p_{r_x}(t)\,p_{r_y}(t)\,p_{r_z}(t)]^T$, is such that $p_{r_y}(t)=\SI{0}{\meter}$ and $p_{r_z}(t)=\SI{1}{\meter}$ %
remain constant over the whole trajectory, while  $p_{r_x}(t)$ oscillates sinusoidally between \SI{-1.2}{\meter} and \SI{1.2}{\meter}
with constant amplitude and time-varying frequency. The time-varying frequency is chosen such that the envelope of $\ddot{p}_{r_x}(t)$ is first  quasi-linearly increasing from \SI{0}{\meter\per\second\squared} up to \SI{5.9}{\meter\per\second\squared} and then quasi-linearly decreasing down to \SI{0}{\meter\per\second\squared} -- see Fig.~\ref{fig:ExpOnePosition}: first (top) and last (bottom) plot. The trajectory includes also a start-from-rest maneuver and a stop-to-rest maneuver at the beginning and the end, respectively.
The reference orientation is constant and horizontal during the whole trajectory, i.e., $\vect{R}_r(t)=\mat{I}_{3\times 3}$
The batch is composed by three experiments described in the following.

In Exp.\,1.1 a value of $r_{xy}=\SI{3}{\newton}$ has been selected, which fits well inside the actual maximum lateral force of the Tilt-Hex given its mass of $m=\SI{1.8}{\kilo\gram}$. 
This means that in the parts of the trajectory in which $|\ddot{p}_{r_x}|>\frac{\SI{3}{\newton}}{\SI{1.8}{\kilo\gram}}= \SI{1.66}{\meter\per\second\squared}$ we expect the controller to let the platform deviate from $\vect{R}_r$ in order to track the high lateral acceleration. On the other side we expect a good independent tracking of position and orientation when $\ddot{p}_{r_x}\leq \SI{1.66}{\meter\per\second\squared}$.
In fact, Exp.1.1 is meant to illustrate the canonical behavior of proposed controller when controlling a fully-actuated LBF platform. 

In Exp.\,1.2 we present the result of the controlled system in the extreme situation of $r_{xy}=\SI{0}{\newton}$ which resembles an underactuated aerial vehicle. Therefore we expect the controller to let the platform deviate almost always from $\vect{R}_r$. This experiment is meant to show that the proposed controller can handle the (classic) underactuated case,  thus not requiring the user to switch between  different controllers.

In Exp.\,1.3 we perform the same trajectory with the state-of-the-art controller presented in~\cite{2015e-RajRylBueFra}, a controller that does not take into account the input saturations. This experiment is meant to show how the proposed controller outperforms the state of the art controllers in terms of robustness and stability.

\subsubsection{Experiment 1.1}
\label{sec:exp1_1}

\begin{figure}[t]
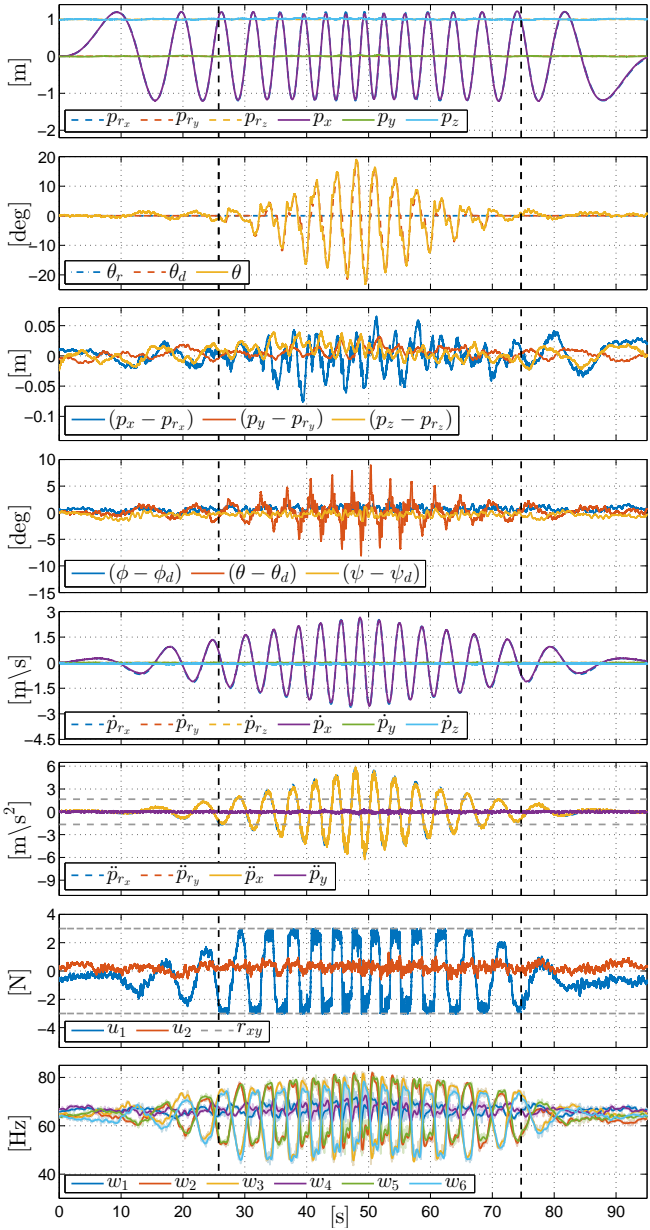

\centering
\figExpOnePosition\\
\figExpOneOrientation\\
\figExpOnePosError\\
\figExpOneRotError\\
\figExpOneVelocity\\
\figExpOneAcceleration\\
\figExpOneUOne\\
\figExpOneDesW
\caption{Exp.\,1.1: Desired position: sinusoidal motion along the $\vect{x}_W$ axis with constant amplitude and triangular (first increasing then decreasing) frequency. Desired orientation: constantly horizontal. Lateral force bound: constant $r_{xy}=\SI{3}{\newton}$.}
\label{fig:ExpOnePosition}
\end{figure}

Fig.~\ref{fig:ExpOnePosition} visualizes the main results of Exp\,1.1.
There are three clearly distinct temporal phases in the experiment separated by the vertical dashed lines in the second plot and defined by $t \in T_1 = [\SI{0}{\second},\SI{25.8}{\second}]$, $t \in T_2 = [\SI{25.8}{\second},\SI{74}{\second}]$, and $t \in T_3 = [\SI{74}{\second},\SI{95}{\second}]$, respectively.

In the first and third phases the full-pose reference trajectory is always feasible. In fact, the norm of the acceleration of the reference position trajectory $|\ddot{p}_{r_x}|$ is always below $\SI{1.66}{\meter\per\second\squared}$ (see the sixth plot- dashed grey lines). This value represents the maximum lateral acceleration that the controller can impose to the platform when the orientation is kept horizontal and the bound $r_{xy}=\SI{3}{\newton}$ and mass of $m=\SI{1.8}{\kilo\gram}$ are considered.  In the second phase, instead, the full-pose reference trajectory is not always feasible and in the middle of the second phase, in the neighborhood of $t=\SI{49}{\second}$, the trajectory is mostly unfeasible, since the lateral acceleration required has peaks of $\SI{5.9}{\meter\per\second\squared}$ ($3.5$ times the maximum lateral acceleration attainable while keeping a horizontal orientation).

Accordingly to what expected, in the `feasible' phases ($T_1$ and $T_3$) both the orientation and position tracking errors w.r.t. the reference full-pose trajectory, are relatively low. In particular we have that the pose error is in average zero, and $\|\vect{p}(t)-\vect{p}_r(t)\|<\SI{0.02}{\meter}$ and $|\theta(t)-\theta_r(t)|<\SI{1.7}{\degree}$. 
In the `unfeasible' phase ($T_2$), the position tracking error is still good ($\|\vect{p}(t)-\vect{p}_r(t)\|<\SI{0.06}{\meter}$) while the desired orientation $\mat{R}_d(t)$ sensibly deviates from the reference one $\mat{R}_r(t)$ with a peak overshoting \SI{20}{\degree} for $|\theta_d(t)-\theta_r(t)|$ (see second plot). In fact, tilting is the only way by which the platform can track the desired position, given the lateral force bounds.
It is interesting to note that not only the reference position is well tracked along the whole experiment, but also the translational velocity and acceleration are, as shown in the fifth plot and sixth plot, respectively.

\begin{figure}[t]
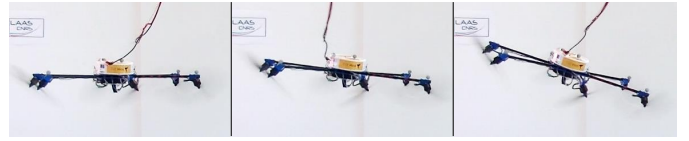

\centering
\figTiltHex
\caption{The Tilt-Hex performing Exp. 1.1 at different time instances: Left $t=\SI{15.4}{\second}$; middle $t=\SI{37.7}{\second}$; right $t=\SI{47.9}{\second}$. Although the reference orientation is constant and horizontal the Tilt-Hex adapts it orientation to allow for following the reference position.}
\label{fig:Tilt-Hex}
\end{figure}

Looking at the seventh plot one can appreciate how the controller keeps always the lateral force within the requested bounds and at the same time touches and stays on the bounds several times for several seconds. This is a clear index that the controller exploits at best the platform capabilities. Notice also how in the  phases in which the lateral force touches the bounds, the controller exploits the tilting of the platform (compare the second plot) in order to compensate for the partial loss of the lateral control authority and attain the force required to produce the needed acceleration.

Finally, for the sake of completeness, we present also the actual six rotor spinning frequencies $w_1,\dots, w_6$ in the eighth and final plot. 

Figure~\ref{fig:Tilt-Hex} shows the Tilt-Hex performing Exp.~1.1 in three different time instants. Furthermore, we encourage the reader to watch the multimedia atachment showing this and the other experiments.

\subsubsection{Experiment 1.2}

To test the behavior of the controller with an underactuated aerial system, in Exp.\,1.2 we ask the system to track the same trajectory of Exp\,1.1 but setting a zero maximum lateral force, i.e., $r_{xy}=\SI{0}{\newton}$. In this way the Tilt-Hex should emulate the behavior of an underactuated platform as, e.g., a collinear multi-rotor.
Figure~\ref{fig:ExpOnePosition_zero} shows the main plots, where we omit the plots that are similar to the ones in Fig.~\ref{fig:ExpOnePosition} in order to focus on the salient differences. Contrarily to Exp.\,1.1, in Exp.\,1.2 the first and third `feasible' phases do not exists. The whole experiment is a long unfeasible phase due to the constraint  $r_{xy}=\SI{0}{\newton}$, which makes impossible,  at any time, to track the constantly horizontal  reference orientation defined while following the sinusoidal reference position trajectory.

\begin{figure}[t]
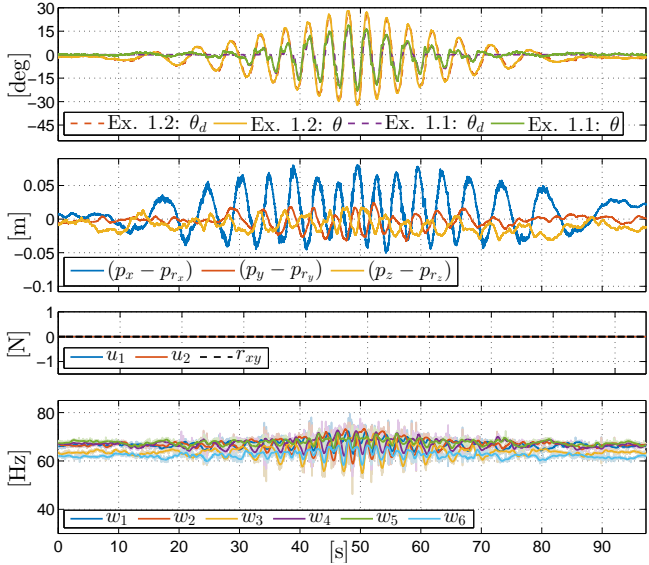

\centering
\figExpOneOrientationZero\\
\figExpOnePosErrorZero\\
\figExpOneUOneZero\\
\figExpOneDesWZero
\caption{Exp.\,1.2: Same desired trajectory as in Exp.1.1 but with $r_{xy}=0$.}
\label{fig:ExpOnePosition_zero}
\end{figure}

The orientation tracking of Exp.\,1.2 is presented in the first plot of Fig.~\ref{fig:ExpOnePosition_zero}, where also the one Exp.\,1.1 is shown again for ease of comparison. In the period of time $T_1$ defined in Sec.~\ref{sec:exp1_1}, the pitch tracking error in Exp.\,1.2, $|\theta(t)-\theta_r(t)|$, reaches $\SI{11}{\degree}$, i.e., $6.5$ times the peak of Exp.\,1.1 in the same period. 
In the period of time $T_2$ defined in Sec.~\ref{sec:exp1_1}, the pitch tracking error in Exp.\,1.2, $|\theta(t)-\theta_r(t)|$ reaches $\SI{31}{\degree}$ i.e., about $1.5$ times the peak of Exp.\,1.1 in the same period. 

Regarding the translation (second plot of  of Fig.~\ref{fig:ExpOnePosition_zero}), the peak of the position tracking error is about $3$ times larger (in the period $T_1$) and $1.4$ times larger (in the period $T_2$), when compared to the error peak of Exp.\,1.1 in the same periods. This is due to the fact that full actuation helps in minimizing the position tracking too (not only the orientation tracking).

In the third  plot of Fig.~\ref{fig:ExpOnePosition_zero} we can see that that the inputs $u_1$ and $u_2$ remain zero as expected during the full trajectory tracking, as required.
Finally, for completeness, we present also the actual six rotor spinning frequencies $w_1,\dots, w_6$ in the fourth and last plot of Fig.~\ref{fig:ExpOnePosition_zero}.
\subsubsection{Experiment 1.3}

In order to compare with the state-of-the-art methods such as~\cite{2015e-RajRylBueFra}, in Exp.\,1.3 we tested the controller with a saturated rotor spinning velocity with the minimum and maximum values in Exp.\,1.1 ($\SI{43}{\hertz}\le w_i \le\SI{83}{\hertz},$ $i\in [1\dots 6]$).  The results are depicted in Fig.~\ref{fig:ExpOnePositionInf}. The platform tracks well the reference trajectory till the output reaches its limit (t=\SI{34}{\second}) (see Fig.~\ref{fig:ExpOnePositionInf} - third plot, dashed line). Spinning velocities outside the limitation are asked by the controller and are therefore saturated.  This means that $\vect u_1$ and $\vect u_2$ in \eqref{eq:u1_ctrl}--\eqref{eq:u2_ctrl} cannot be generated anymore and the trajectory tracking performance decreases rapidly, until the system becomes completely unstable diverging from the reference position ($||\vect{p}-\vect{p}_r||>\SI{0.5}{\meter}$) and reference velocity ($||\dot{\vect{p}}-\dot{\vect{p}}_r||>\SI{1.2}{\meter\per\second}$) such that we had to abort the experiment. This experiment clearly shows how our controller outperforms a state-of-the-art controller in terms of performances and, most important of all, stability and safety.

\begin{figure}[t]
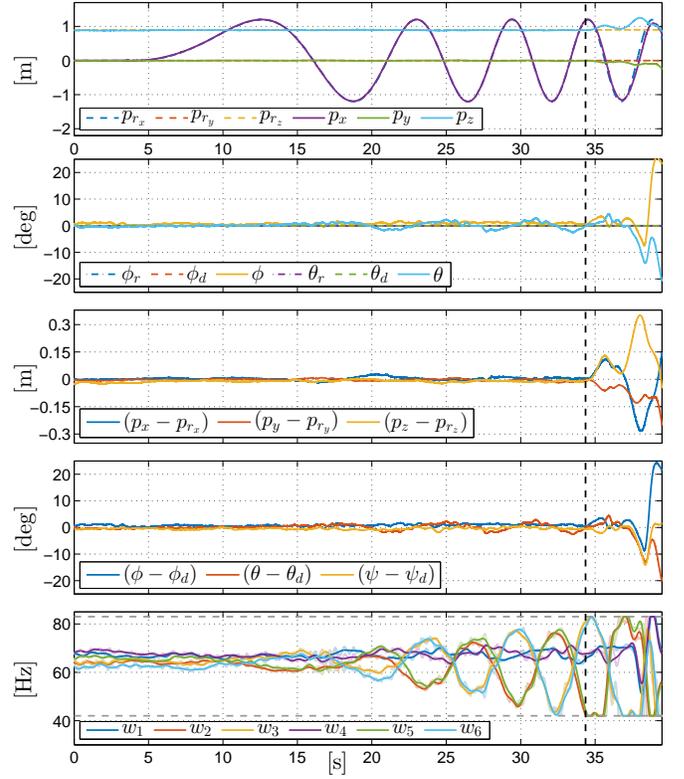

\centering
\figExpOnePositionSat\\
\figExpOneOrientationSat\\
\figExpOnePosErrorSat\\
\figExpOneRotErrorSat\\
\figExpOneDesWSat\\
\caption{Exp.\,1.3: Same desired trajectory as in Exp.1.1 but with saturated rotor spinning velocity $\vect w$ (saturation indicated by dashed grey lines in plot five). The experiment is automatically stopped after about \SI{39}{\second} because the system becomes unstable.}
\label{fig:ExpOnePositionInf}
\end{figure}

\subsection{Experiment 2}

To present the full capabilities of the full pose controller on LBF vehicles, in Exp.\,2 we set $\vect{p}_r(t)$ as in Exp.\,1.1, but we additionally ask the platform to follow a $\vect{R}_d(t)$ generated applying to $\vect{I}_{3\times 3}$ a sinusoidal rotation about the $\vect{y}_W$ axis (with an amplitude of $\SI{10}{\degree}$). This rotational motion is particularly chosen such that the orientation of the Tilt-Hex is in opposition of phase with respect to the orientation that an underactuated vehicle would need in order to track $\vect{p}_r(t)$ (i.e., the top part of the platform facing outwards at the two ends of the position trajectory, while for, e.g. a quadrotor the top would face always toward the center of the position trajectory). 
Also in this case the reference-to-actual position error and the desired-to-actual orientation error remain bounded and small (see the third and fourth plot of Fig.~\ref{fig:ExpOneTwoPosition}). The maximum lateral thrust is reached sooner than Exp.\,1.1 (at $t=\SI{10}{\second}$), due to the special inclination required to the Tilt-Hex. This results in an earlier adaptation of the desired orientation trajectory $\vect{R}_d$. As expected, at the time of highest accelerations ($\SI{45}{\second}\le t \le \SI{55}{\second}$) the desired pitch angle $\theta_d$ is almost inverted with respect to the reference angle $\theta_r$.

\begin{figure}[t]
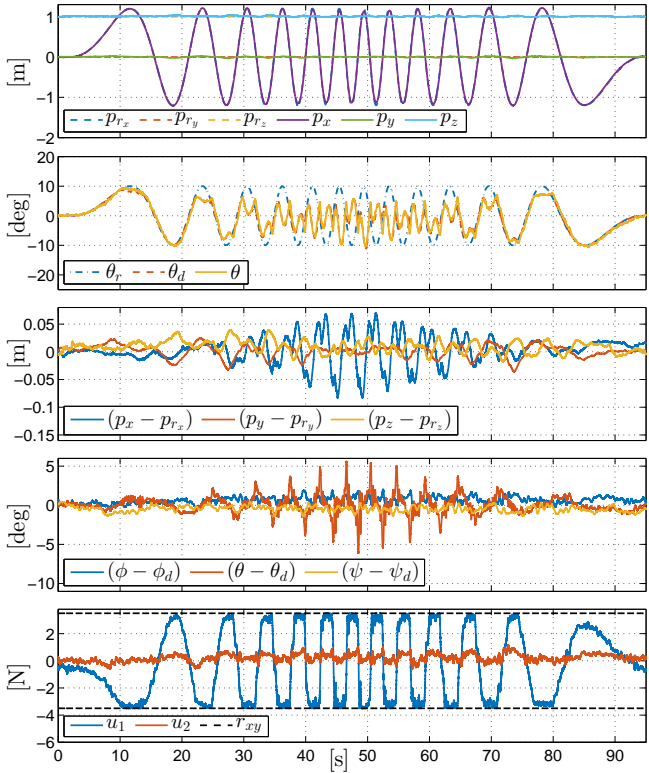

\centering
\figExpOneTwoPosition\\
\figExpOneTwoOrientation\\
\figExpOneTwoPosError\\
\figExpOneTwoRotError\\
\figExpOneUOneTwo\\
\caption{Exp.\,2: 
Desired position: as in Exp.\,1.1. Desired orientation: sinusoidal rotation about the $\vect{y}_W$ axis in opposition of phase w.r.t. a hypothetical quadrotor following the desired position. Lateral force bound: constant $r_{xy}=\SI{3}{\newton}$.}
\label{fig:ExpOneTwoPosition}
\end{figure}

\subsection{Experiment 3}

The conclusive Exp.\,3 has been designed to stress the fact that the presented controller can seamlessly work with \emph{under} and \emph{fully-actuated} platforms and moreover with platforms that can actively change between these two configurations during flight, as the one presented in~\cite{2016j-RylBicFra}. 
The plots of the experiment are reported in Fig.~\ref{fig:ExpTwoPosition}.

The reference trajectory in position for the second experiment consists out of two regular sinusoidal motions along the $\vect{x}_W$ and $\vect{y}_W$ axes with an amplitude of $\SI{1.3}{\meter}$ and $\SI{0.5}{\meter}$, respectively, and constant frequencies.
The first seconds of the translational trajectory are a polynomial trajectory to connect the initial state of the Tilt-Hex (hovering at an arbitrary position) with the sinusoidal reference trajectory. 
The reference orientation is constant $\mat{R}_r(t)=\mat{I}_{3\times 3}$.
The lateral force bound $r_{xy}$ is changed over time, in particular, it is $r_{xy}(t)=\SI{0}{\newton}$ for $t\in[\SI{0}{\second},\SI{18}{\second}]$, $r_{xy}(t)=\SI{10}{\newton}$ for $t\in[\SI{38}{\second},\SI{56}{\second}]$, and it is linearly increasing from \SI{0}{\newton} to \SI{10}{\newton} for 
$t\in[\SI{18}{\second},\SI{38}{\second}]$
The selection of the time-varying $r_{xy}(t)$ forces to system to be underactuated until $t=\SI{18}{\second}$.
 
As it should be, the position tracking is always good (first and third plots). However the system cannot track at the same time the reference position the reference orientation. 
Until $t=\SI{34}{\second}$ the system is partially fully actuated (w.r.t. the trajectory to be followed). The orientation tracking gradually improves. At $t=\SI{34}{\second},$ $r_{xy}$ is large enough to track the reference orientation at any time (see second plot).

The behavior of $r_{xy}$ is  visualized in the fifth plot, together with the actual lateral forces implemented by the controller, which are always kept within the bounds.
The fact that the lateral force bound changes over time does not deterioriate  the behavior of the controller, which is instead able to cope with the time-varying constraint exploiting the platform capability always at its best.

Finally it is worth to mention that the ranges of the propeller spinning frequencies  utilized by the controller ($w_1\dots w_6$) naturally increases with the increase of $r_{xy}(t)$ at the benefit of completely tracking the full pose reference trajectory (see the sixth plot).

\begin{figure}[t]
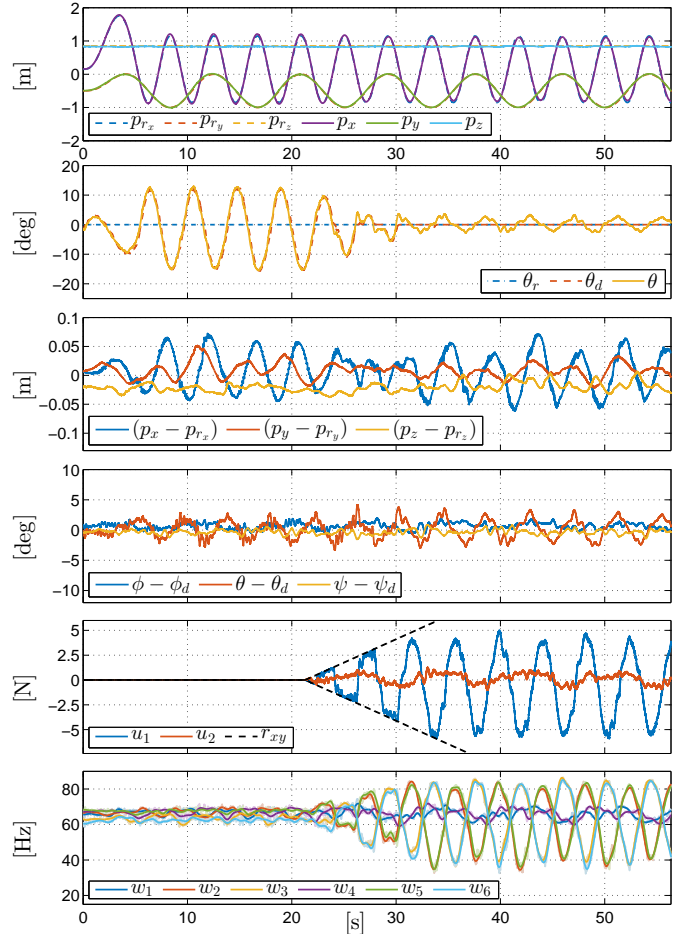

\centering
\figExpTwoPosition\\
\figExpTwoOrientation\\
\figExpTwoPosError\\
\figExpTwoRotError\\
\figExpTwoUOne\\
\figExpTwoDesW
\caption{Exp.\,3: Desired position: composition of two sinusoidal motions along $\vect{x}_B$ and $\vect{y}_B$, with constant amplitudes and frequencies.  Desired orientation: constant and horizontal. Lateral force bound: $r_{xy}$ linearly increasing from \SI{0}{\newton} to \SI{10}{\newton}.}
\label{fig:ExpTwoPosition}
\end{figure}

\section{Conclusions}\label{sec:concl}

In this paper we introduced the new class of Laterally-Bounded Force (LBF) aerial vehicles. This class is general enough to encompass a large variety of recently conceived aerial vehicles having the possibility to actuate the thrust in a direction other than the principal one. Common underactuated platforms are included in this class as a degenerate (but fully admissible) case. 

For this class of vehicles we proposed a geometric controller in SE(3) that is able to let it track any feasible full-pose (6D) trajectory. The controller adapts seamlessly to the case the trajectory is (or becomes) not feasible or that the platform is (or becomes) underactuated. Being defined in SE(3) the controller is not prone to the singularities of local chart orientation representations.

The practicability of the theory has been shown in real experiments. Furthermore this controller has been already used as inner loop controller for other projects involving also aerial physical interaction, as, e.g., in~\cite{2017e-RylMusPieCatAntCacFra}.

In future we plan to study adaptive and robust techniques to deal with parameter uncertainties and malfunction of some of the actuators.

\section{Appendix}\label{sec:appendix}

\begin{lem}\label{lem:cascade_system}
Consider the cascade system
\begin{align}\label{eq:cascade}
\dot{\mathbf{x}}_1(t) &= \mathbf{A} \mathbf{x}_1(t) + \mathbf{B} \mathbf{x}_2(t) \\
\dot{\mathbf{x}}_2(t) &= \mathbf{f}\left(\mathbf{x}_2(t)\right)\nonumber
\end{align}
where $\mathbf{A} \,\in\,\mathbb{R}^{n_1 \times n_1}$, $\mathbf{B} \in \mathbb{R}^{n_1 \times n_2}$ and $\mathbf{f} : \mathbb{R}^{n_2} \to \mathbb{R}^{n_2}$ is a continuous function. We assume that
\begin{enumerate}
\item $\mathbf{A}$ is a Hurwitz matrix;
\item there exists a compact set $\mathcal{X} \subseteq \mathbb{R}^{n_2}$ with $\mathbf{0} \in \mathcal{X}$, and suitable constants $C>0$, $\rho>0$, such that
\begin{equation}\label{eq:x_2}
\|\mathbf{x}_2(t)\| \leq C e^{-\rho \,t} \|\mathbf{x}_2(0)\|,
\end{equation}
for any $\mathbf{x}_2(0) \in \mathcal{X}$.
\end{enumerate}
Then, the zero equilibrium $(\mathbf{x}_1, \mathbf{x}_2)=(\mathbf{0},\mathbf{0})$ of the cascade system \eqref{eq:cascade} is exponentially stable. The basin of attraction is $\mathbb{R} \times \mathcal{X}$. 
\end{lem}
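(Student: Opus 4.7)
The plan is to prove the lemma by an explicit variation-of-constants representation of $\mathbf{x}_1(t)$ and then to bound the resulting convolution integral using the two exponential-decay hypotheses.

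First, since $\mathbf{A}$ is Hurwitz there exist constants $M>0$ and $\lambda>0$ such that $\|e^{\mathbf{A}t}\| \leq M e^{-\lambda t}$ for all $t\geq 0$. Treating $\mathbf{B}\mathbf{x}_2(t)$ as a forcing term on the linear $\mathbf{x}_1$-subsystem, write
\begin{equation*}
\mathbf{x}_1(t) = e^{\mathbf{A}t}\mathbf{x}_1(0) + \int_0^t e^{\mathbf{A}(t-s)}\mathbf{B}\mathbf{x}_2(s)\,ds.
\end{equation*}
Using the bound on $\|e^{\mathbf{A}t}\|$ together with hypothesis~(2), which gives $\|\mathbf{x}_2(s)\|\leq C e^{-\rho s}\|\mathbf{x}_2(0)\|$ for every $\mathbf{x}_2(0)\in\mathcal{X}$, one obtains
\begin{equation*}
\|\mathbf{x}_1(t)\| \leq M e^{-\lambda t}\|\mathbf{x}_1(0)\| + M\|\mathbf{B}\|C\|\mathbf{x}_2(0)\|\int_0^t e^{-\lambda(t-s)}e^{-\rho s}\,ds.
\end{equation*}

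Second, I would evaluate the convolution $\int_0^t e^{-\lambda(t-s)}e^{-\rho s}ds$ by a short case analysis: if $\lambda\neq\rho$ the integral equals $(e^{-\rho t}-e^{-\lambda t})/(\lambda-\rho)$, while if $\lambda=\rho$ it equals $t e^{-\lambda t}$. In either case, picking any $\mu$ with $0<\mu<\min(\lambda,\rho)$, there exists a constant $K>0$ such that the integral is bounded by $K e^{-\mu t}$ for all $t\geq 0$ (in the coincident case absorbing the polynomial factor $t$ by lowering the rate slightly). Consequently
\begin{equation*}
\|\mathbf{x}_1(t)\| \leq M\|\mathbf{x}_1(0)\|e^{-\lambda t} + MK\|\mathbf{B}\|C\|\mathbf{x}_2(0)\|\,e^{-\mu t}.
\end{equation*}

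Third, combining this with hypothesis~(2) directly yields a joint exponential bound
\begin{equation*}
\|\mathbf{x}_1(t)\|+\|\mathbf{x}_2(t)\| \leq \kappa\,(\|\mathbf{x}_1(0)\|+\|\mathbf{x}_2(0)\|)\,e^{-\mu t}
\end{equation*}
for a suitable $\kappa>0$ and for all $(\mathbf{x}_1(0),\mathbf{x}_2(0))\in\mathbb{R}^{n_1}\times\mathcal{X}$. This is exactly exponential stability of the zero equilibrium with basin of attraction $\mathbb{R}^{n_1}\times\mathcal{X}$, as claimed. The argument is essentially routine; the only mild subtlety is the coincident-rate case $\lambda=\rho$, where one must trade a small amount of decay rate to dominate the linear factor $t$, but no further structural difficulty arises because the $\mathbf{x}_2$-subsystem is assumed to decay in open loop and does not depend on $\mathbf{x}_1$.
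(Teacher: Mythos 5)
Your proof is correct and follows essentially the same route as the paper: variation of constants for the $\mathbf{x}_1$-subsystem, an exponential bound on $e^{\mathbf{A}t}$ from the Hurwitz property, and the decay hypothesis on $\mathbf{x}_2$ to dominate the convolution term. You are in fact slightly more careful than the paper in explicitly evaluating the convolution integral and handling the coincident-rate case $\lambda=\rho$, which the paper's proof passes over by simply asserting the existence of suitable constants.
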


\begin{proof}
From the first equation in \eqref{eq:cascade}, we have that
$$
\mathbf{x}_1(t)= e ^{\mathbf{A}t}\mathbf{x}_1(0)+ \int_{0}^t e^{\mathbf{A}(t-\tau)}\mathbf{B}\mathbf{x}_2(\tau) \,d\tau.
$$
Since $\mathbf{A}$ is Hurwitz and since signal $\mathbf{x}_2$ satisfies \eqref{eq:x_2}, it follows that there exist suitable positive constants $C_1$, $C_2$, $\rho_1$, $\rho_2$ such that
\begin{align*}
\|\mathbf{x}_1(t)\| & \leq C_1 e^{-\rho_1 t} \|\mathbf{x}_1(0)\| + \left|\left| \int_{0}^t e^{\mathbf{A}(t-\tau)}\mathbf{B}\mathbf{x}_2(\tau) \,d\tau \right|\right| \\
&\leq C_1 e^{-\rho_1 t} \|\mathbf{x}_1(0)\| + C_2 e^{-\rho_2 t} \|\mathbf{x}_2(0)\|
\end{align*}
Defining $C_3=\max \left\{C_1, C_2\right\}$, $\rho_3=\min \left\{\rho_1, \rho_2\right\}$, yields
$$
\|\mathbf{x}_1(t)\|  \leq C_3 e^{-\rho_3 t} \left( \|\mathbf{x}_1(0)\| + \|\mathbf{x}_2(0)\| \right)
$$ 
and, in turn,
$$
\|\mathbf{x}_1(t)\|^2  \leq C_3^2 e^{-2\rho_3 t} \left( \|\mathbf{x}_1(0)\|^2 + 2\|\mathbf{x}_1(0)\|\|\mathbf{x}_2(0)\| + \|\mathbf{x}_2(0)\|^2 \right).
$$ 
Since $2\|\mathbf{x}_1(0)\|\|\mathbf{x}_2(0)\| \leq \|\mathbf{x}_1(0)\|^2 + \|\mathbf{x}_2(0)\|^2$ we have, 
$$
\|\mathbf{x}_1(t)\|^2  \leq 2C_3^2 e^{-2\rho_3 t} \left( \|\mathbf{x}_1(0)\|^2 +  \|\mathbf{x}_2(0)\|^2 \right).
$$
Now, combining the previous inequality with \eqref{eq:x_2}, we can write
\begin{align*}
&\|\mathbf{x}_1(t)\|^2 + \|\mathbf{x}_2(t)\|^2 \\
&\,\,\,\,\,\,\,\,\,\, \leq 2C_3^2 e^{-2\rho_3 t} \left( \|\mathbf{x}_1(0)\|^2 +  \|\mathbf{x}_2(0)\|^2 \right) + C^2 e^{-2\rho t} \|\mathbf{x}_2(0)\|^2\\
& \,\,\,\,\,\,\,\,\,\, \leq \left(2C_3^2 e^{-2\rho_3 t} + C^2 e^{-2\rho t} \right) \left(\|\mathbf{x}_1(0)\|^2 +  \|\mathbf{x}_2(0)\|^2\right)
\end{align*}
Let $\mathbf{x}(t)= \left[\mathbf{x}_1(t) \,\,\,\,\mathbf{x}_2(t)\right]^T$ and let $\rho_4= \min \left\{\rho_3,\rho\right\}$ and $C_4=\sqrt{2C_3^2+C^2}$. Then
$$
\|\mathbf{x}(t)\|^2 \leq C_4^2 e^{-2\rho_4t} \|\mathbf{x}(0)\|^2,
$$
and, equivalently,
$$
\|\mathbf{x}(t)\| \leq C_4 e^{-\rho_4t} \|\mathbf{x}(0)\|.
$$
This shows that systems in \eqref{eq:cascade} is exponentially stable.
\end{proof}

\bibliographystyle{IEEEtran}
\bibliography{./alias,./main,./bibCustom}

\end{document}